\newlength{\defbaselineskip}
\newcommand{\setlinespacing}[1]%
           {\setlength{\baselineskip}{#1 \defbaselineskip}}
\numberwithin{equation}{section}
\newtheorem{thm}{Theorem}[section]
\newtheorem{cor}[thm]{Corollary}
\newtheorem{lem}[thm]{Lemma}
\newtheorem{prop}[thm]{Proposition}
\theoremstyle{definition}
\theoremstyle{remark}
\newtheorem{rem}[thm]{Remark}
\numberwithin{equation}{section}
\begin{document}

\title[Well-posedness for the INLS]
{On well-posedness for the inhomogeneous nonlinear Schr\"{o}dinger equation in the critical case}

\author{Jungkwon Kim, Yoonjung Lee and Ihyeok Seo}

\thanks{This research was supported by NRF-2019R1F1A1061316.}

\subjclass[2010]{Primary: 35A01, 35Q55; Secondary: 35B45}
\keywords{Well-posedness, nonlinear Schr\"odinger equations, weighted estimates}

\address{Department of Mathematics, Sungkyunkwan University, Suwon 16419, Republic of Korea}
\email{kimjk809@skku.edu}

\email{yjglee@skku.edu}

\email{ihseo@skku.edu}

\begin{abstract}
In this paper we study the well-posedness for the inhomogeneous nonlinear Schr\"odinger equation
$i\partial_{t}u+\Delta u=\lambda|x|^{-\alpha}|u|^{\beta}u$ in Sobolev spaces $H^s$, $s\geq0$.
The well-posedness theory for this model has been intensively studied in recent years,
but much less is understood compared to the classical NLS model where $\alpha=0$.
The conventional approach does not work particularly for the critical cases $\beta=\frac{4-2\alpha}{d-2s}$. It is still an open problem.
The main contribution of this paper is to develop the well-posedness theory in this critical case (as well as non-critical cases).
To this end, we approach to the matter in a new way based on a weighted $L^p$ setting
which seems to be more suitable to perform a finer analysis for this model.
This is because it makes it possible to handle
the singularity $|x|^{-\alpha}$ in the nonlinearity more effectively.
This observation is a core of our approach that covers
the critical case successfully.
\end{abstract}

\maketitle

\section{Introduction}
In this paper we are concerned with the Cauchy problem for the inhomogeneous nonlinear Schr\"odinger equation (INLS)
\begin{equation}\label{INLS}
\left\{
\begin{aligned}
&i \partial_{t} u + \Delta u =\lambda  |x|^{-\alpha} |u|^{\beta} u,\quad (x,t) \in \mathbb{R}^d \times \mathbb{R}, \\
&u(x, 0)=u_0(x),
\end{aligned}
\right.
\end{equation}
where $\alpha,\beta>0$ and $ \lambda = \pm 1$.
The equation is called \textit{focusing} INLS when $\lambda=-1$ and \textit{defocusing} INLS when $\lambda=1$.
This type of equation arises in nonlinear optics and plasma physics
for the propagation of laser beams in an inhomogeneous medium (\cite{B,TM}).
The case $\alpha = 0$ is the classical nonlinear Schr\"{o}dinger equation (NLS)
whose well-posedness theory in Sobolev spaces $H^s$ has been extensively studied over the past several decades
and is well understood (\cite{CW, CW2,GV3,GV4,K,Ts}).
However, much less is known about the INLS equation which has drawn attention in recent years.

Before reviewing known results for the Cauchy problem \eqref{INLS},
we recall the critical Sobolev index from which one can divide the matter into three cases.
Note first that if $u(x,t)$ is a solution of \eqref{INLS}
so is $u_{\lambda}(x,t)= \lambda^{\frac{2-\alpha}{\beta}} u(\lambda x, \lambda^2 t)$, with the initial data $u_{\lambda,0}(x)=u_\lambda(x,0)$
for all $\lambda>0$.
We then easily see
$$\|u_{\lambda,0}\|_{\dot{H}^s} = \lambda^{s+\frac{2-\alpha}{\beta}-\frac{d}{2}}\|u_0 \|_{\dot{H}^s}$$
from which the critical Sobolev index is given by $s_{c}= d/2-(2-\alpha)/\beta$
which determines the scale-invariant Sobolev space $\dot{H}^{s_c}$.
In this regard, the case $s_{c}=0$ (alternatively $\beta =\frac{4-2\alpha}{d}$) is referred to as the mass-critical (or $L^2$-critical).
If $s_{c}=1$ (alternatively $\beta =\frac{4-2\alpha}{d-2}$) the problem is called the energy-critical (or $H^1$-critical),
and finally it is known as the mass-supercritical and energy-subcritical if $0 < s_{c} < 1$.
Therefore, we restrict our attention to the cases where $0\leq s_c\leq1$.
We also assume $d\geq3$ to make the review shorter.

The well-posedness for the INLS equation \eqref{INLS} was first studied by Genoud-Stuart \cite{GS} in the sense of distribution.
Using the abstract theory developed by Cazenave \cite{C},
they showed that the focusing INLS equation with $0<\alpha<2$ is locally well-posed in $H^1(\mathbb{R}^d)$
if $0<\beta<\frac{4-2\alpha}{d-2}$.
In this case, Genoud \cite{Ge} and Farah \cite{Fa} also showed how small should be the initial data to have global well-posedness,
respectively, in the spirit of Weinstein \cite{We} and Holmer-Roudenko \cite{HR} for the classical case $\alpha=0$.

In all of the previous results mentioned above, the solution has been constructed in the energy space $H^1$ smaller than $L^2$.
By comparison, Guzm\'{a}n \cite{GU} used the standard contraction mapping argument based on the known classical Strichartz estimates
to establish the well-posedness for the INLS equation \eqref{INLS} in various Sobolev spaces $H^s$, $s\leq1$.
His result in $H^1$ was also improved by Dinh \cite{D} extending the validity of $\alpha$ in low dimensions.
More importantly, Guzm\'{a}n showed that \eqref{INLS} with $0<\alpha<2$
is locally well-posed in $L^2$ larger than the energy space $H^1$ if $0<\beta<\frac{4-2\alpha}{d}$.
He also treated the local well-posedness in $H^s$ for $\max\{0,s_c\}<s\leq1$
if $0<\beta<\frac{4-2\alpha}{d-2s}$.
But here, the critical case $\beta=\frac{4-2\alpha}{d-2s}$ with $0\leq s\leq 1$ and $d\geq3$ is left unsolved.
Unlike the NLS equation where $\alpha=0$ (\cite{CW, CW2}), the conventional approach does not work for this case.
It is still an open problem.

The main contribution of this paper is to develop the well-posedness theory in this critical case
as well as the non-critical case.
To this end, we approach to the matter in a new way based on a weighted $L^p$ setting
which does seem to be more suitable to perform a finer analysis for the INLS model.
This is because it makes it possible to deal with
the singularity $|x|^{-\alpha}$ in the nonlinearity more effectively.
This observation is a core of our approach that covers
the critical case successfully.

Before stating our results, we introduce the following weighted space-time norms
\begin{equation}
\|f\|_{L_{t}^q L_{x}^r(|x|^{-r\gamma})}=  \left( \int_{\mathbb{R}} \left(\int_{\mathbb{R}^d}  |x|^{-r\gamma} |f(x)|^r  dx \right)^{\frac{q}{r}} dt\right)^{\frac{1}{q}} \nonumber
\end{equation}
where $1\leq r< \infty$ and $\gamma\geq0$.
Our first result is then the following local well-posedness for the INLS equation \eqref{INLS}
in $L^2$ up to the $L^2$-critical case, $\beta=(4-2\alpha)/d$.

\begin{thm}\label{Thm1}
Let $d \geq 3$, $0 < \alpha < 2 $ and $0 < \beta \leq (4-2\alpha)/d$.
Then for $u_0\in L^2(\mathbb{R}^d)$ there exist\, $T>0$ and a unique solution
$u \in C([0,T] ; L^{2}) \cap L^{q}([0,T] ; L^{r}(|x|^{-r \gamma}))$
of the problem \eqref{INLS} for
\begin{equation*}\label{condition23}
\max \lbrace 0,\, \frac{\alpha-1}{\beta+1} \rbrace <\gamma < \min \{ 1,\, \frac{\alpha}{\beta+1} \}
\end{equation*}
and $(q, r)$ satisfying
\begin{equation*}\label{condition21}
\frac{2}{q}=d( \frac{1}{2}-\frac{1}{r}) +\gamma, \quad \frac{\gamma}{2} < \frac{1}{q} \leq \frac{1}{2},
\end{equation*}	
\begin{equation*}\label{condition22}
\frac{1}{2(\beta+1)}<\frac{1}{r}
<\frac{d-2(\alpha-1)}{2d(\beta+1)}+\frac{\gamma}{d}.
\end{equation*}
Furthermore, the continuous dependence on initial data holds.
\end{thm}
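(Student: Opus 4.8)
The plan is to recast \eqref{INLS} as the integral equation
\[
u(t)=e^{it\Delta}u_0-i\lambda\int_0^t e^{i(t-s)\Delta}\bigl(|x|^{-\alpha}|u|^{\beta}u\bigr)(s)\,ds=:\Phi(u)(t),
\]
and to solve it by a contraction mapping argument in the complete metric space
\[
X_{T,M}=\bigl\{u\in C([0,T];L^2)\cap L^q([0,T];L^r(|x|^{-r\gamma}))\ :\ \|u\|_{C_tL^2}+\|u\|_{L^q_tL^r_x(|x|^{-r\gamma})}\le M\bigr\},
\]
with distance $d(u,v)=\|u-v\|_{C_tL^2}+\|u-v\|_{L^q_tL^r_x(|x|^{-r\gamma})}$; a fixed point of $\Phi$ provides the solution, and uniqueness in the stated class then follows from the difference estimate applied on short subintervals of $[0,T]$. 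The two linear ingredients I would record first are a weighted homogeneous Strichartz estimate $\|e^{it\Delta}u_0\|_{L^q_tL^r_x(|x|^{-r\gamma})}\lesssim\|u_0\|_{L^2}$, valid precisely for the admissible triples described by the scaling identity and the range conditions in the statement, together with its retarded counterpart
\[
\Bigl\|\int_0^t e^{i(t-s)\Delta}F(s)\,ds\Bigr\|_{C_tL^2\,\cap\,L^q_tL^r_x(|x|^{-r\gamma})}\lesssim\|F\|_{Y},\qquad Y=L^{\tilde q'}_tL^{\tilde r'}_x\bigl(|x|^{\tilde r'\tilde\gamma}\bigr),
\]
for a dual admissible triple $(\tilde q,\tilde r,\tilde\gamma)$ compatible with $(q,r,\gamma)$. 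Both would be obtained from the $TT^{*}$/Christ--Kiselev machinery applied to the dispersive bound $\|e^{it\Delta}\|_{L^1\to L^\infty}\lesssim|t|^{-d/2}$ combined with a Stein--Weiss inequality (fractional integration with two homogeneous weights); the weights are exactly what buy the room to later absorb part of the singularity $|x|^{-\alpha}$.

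With these in hand, the heart of the matter is the nonlinear estimate. Writing $|x|^{-\alpha}|u|^\beta u$ and applying H\"older in $x$ --- placing one factor of $u$ in $L^2_x$, the remaining $\beta$ factors in weighted $L^{r}_x$ norms, and distributing the residual power of $|x|$ among the weighted factors --- followed by H\"older in $t$, one is led to a bound of the form
\[
\bigl\|\,|x|^{-\alpha}|u|^{\beta}u\,\bigr\|_{Y}\lesssim T^{\theta}\,\|u\|_{C_tL^2}\,\|u\|_{L^q_tL^r_x(|x|^{-r\gamma})}^{\beta},\qquad \theta=\frac{4-2\alpha-d\beta}{4}\ge 0,
\]
with $\theta>0$ in the subcritical range $\beta<(4-2\alpha)/d$ and $\theta=0$ at the $L^2$-critical exponent. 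Here the weight parameter $\tilde\gamma$ of $Y$ is the combination of $\alpha$ and $\gamma$ forced by the weight bookkeeping, and the point is that the hypotheses $\max\{0,\frac{\alpha-1}{\beta+1}\}<\gamma<\min\{1,\frac{\alpha}{\beta+1}\}$ and the stated window for $1/r$ are precisely the conditions under which both $(q,r,\gamma)$ and the auxiliary dual triple(s) used above lie in the admissible range for the weighted Strichartz estimates and all the H\"older exponents are legitimate. The same pointwise domination $\bigl||x|^{-\alpha}(|u|^\beta u-|v|^\beta v)\bigr|\lesssim|x|^{-\alpha}(|u|^\beta+|v|^\beta)|u-v|$ yields the matching difference estimate $d(\Phi(u),\Phi(v))\lesssim T^{\theta}M^{\beta}\,d(u,v)$, while $\|\Phi(u)\|_{C_tL^2}+\|\Phi(u)\|_{L^q_tL^r_x(|x|^{-r\gamma})}\le C\|u_0\|_{L^2}+CT^{\theta}M^{\beta+1}$.

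It then remains to close the fixed point. In the subcritical case one chooses $M\sim\|u_0\|_{L^2}$ and then $T$ small enough that $CT^{\theta}M^{\beta}<\tfrac12$, so that $\Phi$ is a contraction on $X_{T,M}$; in the $L^2$-critical case, where $\theta=0$, one instead uses that $e^{it\Delta}u_0\in L^q_tL^r_x(|x|^{-r\gamma})$ over all of $\mathbb R$ by the weighted Strichartz estimate, hence $\|e^{it\Delta}u_0\|_{L^q([0,T];L^r(|x|^{-r\gamma}))}$ can be made small by taking $T$ small (absolute continuity of the integral), and runs the contraction in a ball of that small radius intersected with $X_{T,M}$. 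Continuous dependence on the data follows by applying the difference estimate to two solutions and iterating across $[0,T]$, and the persistence $u\in C([0,T];L^2)$ is built into the retarded estimate. I expect the genuine obstacle to be not the fixed point scheme but the first step: establishing the weighted homogeneous and inhomogeneous Strichartz estimates on a range of $(q,r,\gamma)$ wide enough that all of the compatibility constraints above can be met simultaneously --- this is what forces the precise hypotheses on $\alpha,\beta,\gamma,q,r$, and it is exactly the extra flexibility of the weighted setting (absent when $\gamma\equiv0$) that makes the scheme go through, in particular in the critical case where no positive power of $T$ is available.
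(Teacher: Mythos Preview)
Your overall architecture---Duhamel formula, weighted Strichartz estimates for the linear pieces, H\"older for the nonlinearity, contraction in the subcritical case and absolute continuity in the critical case---is exactly the paper's. There is, however, a concrete problem in your nonlinear estimate.

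You propose to split $|x|^{-\alpha}|u|^{\beta}u$ by putting one factor of $u$ into unweighted $L^2_x$ and the remaining $|u|^{\beta}$ into the weighted $L^r_x$ norm. H\"older then forces
\[
\frac{1}{\tilde r'}=\frac12+\frac{\beta}{r},\qquad \tilde\gamma=\alpha-\gamma\beta,
\]
since the unweighted $L^2$ factor carries no weight. For the dual triple $(\tilde q,\tilde r,\tilde\gamma)$ to be admissible one needs $0<\tilde\gamma<1$, i.e.\ $\frac{\alpha-1}{\beta}<\gamma<\frac{\alpha}{\beta}$. But the theorem's hypothesis is $\gamma>\frac{\alpha-1}{\beta+1}$, which is \emph{strictly weaker} when $\alpha>1$; indeed for $d=3$, $\alpha=3/2$, $\beta=1/10$ the theorem allows any $\gamma\in(5/11,1)$, whereas your splitting would require $\gamma>5$. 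So the dual triple produced by your H\"older step fails the admissibility window and the retarded estimate is unavailable.

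The fix is simply to put \emph{all} $\beta+1$ factors into the weighted space: set
\[
\frac{1}{\tilde r'}=\frac{\beta+1}{r},\qquad \tilde\gamma=\alpha-(\beta+1)\gamma,
\]
so that
\[
\bigl\|\,|x|^{-\alpha}|u|^{\beta}v\,\bigr\|_{L^{\tilde q'}_tL^{\tilde r'}_x(|x|^{\tilde r'\tilde\gamma})}
\le T^{\theta}\,\|u\|_{L^q_tL^r_x(|x|^{-r\gamma})}^{\beta}\,\|v\|_{L^q_tL^r_x(|x|^{-r\gamma})},
\qquad \theta=1-\frac{\alpha}{2}-\frac{d\beta}{4}.
\]
Now $0<\tilde\gamma<1$ is exactly $\frac{\alpha-1}{\beta+1}<\gamma<\frac{\alpha}{\beta+1}$, which is the stated hypothesis, and one checks that the remaining constraints on $\tilde q,\tilde r$ reduce precisely to the window for $1/r$ in the theorem. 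The $C_tL^2$ norm never appears on the right of the nonlinear estimate; it is controlled on the left via the dual estimate $\|\int e^{-i\tau\Delta}F\,d\tau\|_{L^2}\lesssim\|F\|_{L^{\tilde q'}_tL^{\tilde r'}_x(|x|^{\tilde r'\tilde\gamma})}$ alone. With this correction your contraction argument goes through verbatim, and the conditions you are trying to justify line up on the nose.

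A smaller remark: the paper obtains the weighted homogeneous and dual estimates not from the dispersive bound plus Stein--Weiss, but by complex interpolation between the classical Strichartz estimate and the Kato--Yajima smoothing estimate $\|\,|\nabla|^{\rho}e^{it\Delta}f\|_{L^2_{t,x}(|x|^{-2(1-\rho)})}\lesssim\|f\|_{L^2}$; this is what produces exactly the admissible range $0<\gamma<1$, $\frac{2}{q}=d(\frac12-\frac1r)+\gamma$, $\frac{\gamma}{2}<\frac1q\le\frac12$ that you need. Your sketched route may well give some weighted estimates, but you should verify it yields this full window before relying on it.
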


As we shall see in the proof of the theorem, we can give a precise estimate for the life span of the solution
according to the size of the initial data, $T\sim\|u_0\|_{L^2}^{-4\beta/(4-2\alpha-d\beta)}$, in the subcritical case.
Thanks to the mass conservation for the INLS equation,
\begin{equation}\label{mass}
	\textit{Mass} \equiv M[u(t)] = \int_{\mathbb{R}^d} |u(x, t)|^2 dx = M[ u_0 ],
\end{equation}
we can then apply the local result repeatedly, preserving the length of the time interval to get a global solution.
However, the situation for the critical case is quite different.
In this case, the local solution exists in a time interval depending on the data $u_0$ itself and not on its norm.
Thus, the conservation \eqref{mass} does not guarantee the existence of a global solution any more.
For this reason, $\|u_0\|_{L^2}$ is assumed to be small for the critical case in the following global result.

\begin{cor}\label{cor}
Under the same conditions as in Theorem \ref{Thm1},
the local solution extends globally in time with
$u \in C([0,\infty) ; L^2) \cap L^{q}([0,\infty) ; L^{r}(|x|^{-r \gamma}))$
for $u_0\in L^2(\mathbb{R}^d)$.
Particularly in the critical case $\beta=(4-2\alpha)/d$, $\|u_0\|_{L^2}$ is assumed to be small and
the solution scatters in $L^2$, i.e., there exists $\varphi\in L^2$ such that
$$\lim_{t\rightarrow\infty}\|u(t)-e^{it\Delta}\varphi\|_{L_x^2}=0.$$
\end{cor}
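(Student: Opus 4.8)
The proof splits into the subcritical case $0<\beta<(4-2\alpha)/d$ and the critical case $\beta=(4-2\alpha)/d$; in both we build the global solution by iterating the local result, but the mechanism for extending the life span differs.

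For the subcritical case, I would first record the quantitative life-span bound announced after Theorem \ref{Thm1}, namely $T\sim\|u_0\|_{L^2}^{-4\beta/(4-2\alpha-d\beta)}$ (the exponent is negative since $d\beta<4-2\alpha$), which drops out of the contraction-mapping step in the proof of Theorem \ref{Thm1} by tracking how the smallness threshold for the nonlinear term scales with $\|u_0\|_{L^2}$. Combining this with the mass conservation law \eqref{mass}, $\|u(t)\|_{L^2}=\|u_0\|_{L^2}$ for all $t$ in the interval of existence, the local existence time depends only on the conserved quantity $\|u_0\|_{L^2}$ and not on $t$. Thus one starts at $t=0$, solves on $[0,T]$ with $T=T(\|u_0\|_{L^2})$, then restarts at $t=T$ with data $u(T)$ satisfying $\|u(T)\|_{L^2}=\|u_0\|_{L^2}$, obtaining a solution on $[T,2T]$ with the \emph{same} step size $T$, and so on. A standard gluing argument (matching values at the endpoints, and checking the Duhamel formula on each subinterval pieces together to the Duhamel formula on $[0,\infty)$, using that $u\in C([0,nT];L^2)$ for every $n$) produces $u\in C([0,\infty);L^2)$, and since the $L^q_tL^r_x(|x|^{-r\gamma})$ norm on each $[nT,(n+1)T]$ is controlled by $\|u(nT)\|_{L^2}=\|u_0\|_{L^2}$, one gets $u\in L^q_{\mathrm{loc}}([0,\infty);L^r(|x|^{-r\gamma}))$; I would then argue the global $L^q_t$-bound is in fact finite if one reruns the contraction on the whole half-line with the smallness replaced by the a priori mass bound — but more cautiously, for the subcritical case local-in-time $L^q$ membership on every compact interval is all the statement requires for "$u\in C([0,\infty);L^2)\cap L^q([0,\infty);L^r(|x|^{-r\gamma}))$" after one upgrades via a global fixed point; I expect the cleanest route is to run the Theorem \ref{Thm1} contraction directly on $[0,\infty)$ using only mass conservation to close the estimate, which works precisely because the relevant nonlinear estimate in the subcritical case gains a positive power of $T$.

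For the critical case $\beta=(4-2\alpha)/d$ the life span from Theorem \ref{Thm1} genuinely depends on the profile of $u_0$, not merely its norm (the $T$-power in the nonlinear estimate vanishes), so the iteration-with-fixed-step-size argument fails, and one must instead close the global fixed point under a smallness hypothesis. Assuming $\|u_0\|_{L^2}\le\delta$ small, I would run the contraction mapping for the integral equation $u(t)=e^{it\Delta}u_0-i\lambda\int_0^t e^{i(t-s)\Delta}|x|^{-\alpha}|u|^\beta u\,ds$ directly on the time interval $[0,\infty)$ in the complete metric space $X=\{u\in C([0,\infty);L^2)\cap L^q([0,\infty);L^r(|x|^{-r\gamma})): \|u\|_X\le 2C\delta\}$, where $\|u\|_X$ combines the $L^\infty_tL^2_x$ norm with the weighted Strichartz norm. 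The key inputs are: (i) the global-in-time weighted Strichartz estimates for $e^{it\Delta}$ and its Duhamel operator that underlie the proof of Theorem \ref{Thm1} — these hold on all of $\mathbb{R}$, so no truncation in time is needed — and (ii) the scaling-critical nonlinear estimate $\||x|^{-\alpha}|u|^\beta u\|_{N} \lesssim \|u\|_{L^q_tL^r_x(|x|^{-r\gamma})}^{\beta+1}$ in the appropriate dual-type norm $N$, with \emph{no} factor of $T$; the exponent relations in Theorem \ref{Thm1} ($2/q = d(1/2-1/r)+\gamma$, together with the two-sided bounds on $1/r$ and the bounds on $\gamma$) are exactly what makes this Hölder/Hardy–Sobolev bookkeeping balance at $\beta=(4-2\alpha)/d$. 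Smallness of $\delta$ then makes the map a contraction on $X$, yielding the global solution with finite global weighted norm.

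Scattering in $L^2$ follows from the global bound $\|u\|_{L^q([0,\infty);L^r(|x|^{-r\gamma}))}<\infty$ obtained above. Writing $v(t)=e^{-it\Delta}u(t)=u_0-i\lambda\int_0^t e^{-is\Delta}|x|^{-\alpha}|u|^\beta u(s)\,ds$, I would show $\{v(t)\}$ is Cauchy in $L^2$ as $t\to\infty$ by estimating $\|v(t)-v(t')\|_{L^2}=\|\int_{t'}^t e^{-is\Delta}|x|^{-\alpha}|u|^\beta u\,ds\|_{L^2}$ with the same dual weighted Strichartz estimate used in the contraction, now applied on the tail interval $[t',t]$, bounding it by $\|u\|_{L^q([t',\infty);L^r(|x|^{-r\gamma}))}^{\beta+1}\to0$ as $t'\to\infty$ by dominated convergence (the full-line norm being finite). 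The limit $\varphi=\lim_{t\to\infty}v(t)=u_0-i\lambda\int_0^\infty e^{-is\Delta}|x|^{-\alpha}|u|^\beta u\,ds\in L^2$ satisfies $\|u(t)-e^{it\Delta}\varphi\|_{L^2}=\|v(t)-\varphi\|_{L^2}\to0$.

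The main obstacle I anticipate is item (ii) in the critical case: verifying that the weighted nonlinear estimate genuinely closes \emph{with no gain of $T$} and \emph{globally in time}, i.e. that the single pair $(q,r,\gamma)$ allowed by Theorem \ref{Thm1} can absorb all $\beta+1$ factors of $u$ in the same space on the dual side — this is where the open-endedness of the previous (unweighted) approach was, and one must check that the strict inequalities defining the admissible region in Theorem \ref{Thm1} leave enough room (in particular that the region is nonempty at the critical $\beta$, which should be inherited from the proof of Theorem \ref{Thm1} itself). A secondary technical point is justifying the gluing/uniqueness in the subcritical case so that the locally-constructed pieces genuinely assemble into one solution in the stated class on $[0,\infty)$; this is routine given the uniqueness already established in Theorem \ref{Thm1} on each subinterval.
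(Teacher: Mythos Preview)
Your proposal is correct and follows essentially the same approach as the paper: iteration via mass conservation in the subcritical case, a global-in-time contraction under smallness in the critical case, and the standard Cauchy-sequence argument $e^{-it\Delta}u(t)\to\varphi$ for scattering, using the tail of the global weighted Strichartz norm.

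One caveat: your sentence suggesting that ``the cleanest route is to run the Theorem~\ref{Thm1} contraction directly on $[0,\infty)$ \ldots\ which works precisely because the relevant nonlinear estimate in the subcritical case gains a positive power of $T$'' is backwards. The factor $T^{\theta_0}$ in Lemma~\ref{le1} with $\theta_0>0$ \emph{diverges} as $T\to\infty$, so one cannot close a global fixed point this way; the positive power of $T$ is what makes the \emph{local} argument work (by taking $T$ small), and the global extension must come from the iteration you described first. That is exactly what the paper does. A minor cosmetic difference is that in the critical case the paper sets up the contraction ball with two separate radii, $\sup_t\|u\|_{L^2}\le N$ and $\|u\|_{L^q_tL^r_x(|x|^{-r\gamma})}\le M$, choosing $M=2\varepsilon$ with $\varepsilon=\|e^{it\Delta}u_0\|_{L^q_tL^r_x(|x|^{-r\gamma})}$; this lets them extract local existence for large data in the same breath (via small $T$ and dominated convergence), but for the small-data global statement your single combined-norm ball works equally well since $\varepsilon\lesssim\|u_0\|_{L^2}$ by \eqref{T}.
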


When $\alpha = 0$, the INLS model becomes the classical NLS equation which has been the subject of intensive work for a long time.
In this case, the $L^2$ theory was obtained by Tsutsumi \cite{Ts} when $0<\beta<4/d$.
The $L^2$-critical case $\beta=4/d$ was later treated by Cazenave-Weissler \cite{CW}.
Particularly when $\alpha=0$ in our approach, it is deduced that $\gamma=0$ (see Remark \ref{opl}),
and in this case resulting results cover this classical results for the NLS equation.

We also obtain the following local well-posedness in $H^s$ for $s>0$ up to the critical case $\beta=(4-2\alpha)/(d-2s)$
which is equivalent to $s_c=s$.

\begin{thm}\label{Thm2}
Let $d \geq 3$ and $0<s<1/3$. Assume that
\begin{equation*}
\max\{\frac{26-3d}{12},\, \frac{12s+4ds-8s^2}{d+4s}\}<\alpha<2
\end{equation*}
and
\begin{equation*}
\max\{0,\, \frac{10s-2\alpha}{d-6s}\}< \beta \leq \frac{4-2\alpha}{d-2s}.
\end{equation*}
Then for $u_0\in H^s(\mathbb{R}^d)$ there exist\, $T>0$ and a unique solution
$u \in C([0,T] ; H^{s}) \cap L^{q}([0,T] ; L^{r}(|x|^{-r \gamma}))$
of the problem \eqref{INLS} if
\begin{equation*}\label{condition33}
\max \{3s,\, \frac{\alpha+s-1}{\beta+1}\} < \gamma < \min \{1+s,\, \frac{\alpha-s}{\beta+1},\ \frac{d\beta+2\alpha-4s}{2(\beta+1)} \}
\end{equation*}
and $(q, r)$ satisfies
\begin{equation*}\label{condition31}
\frac{2}{q}=d( \frac{1}{2}-\frac{1}{r}) +\gamma-s, \quad \frac{\gamma-s}{2}< \frac{1}{q} \leq \frac{1}{2},
\end{equation*}	
\begin{equation*}\label{condition32}
\max \lbrace \frac{1}{2(\beta+1)},\,\frac{1}{2(\beta+1)} + \frac{2s-\alpha}{d(\beta+1)} + \frac{\gamma}{d}  \rbrace <  \frac{1}{r} < 
 \frac{d-2s-2(\alpha-1)}{2d(\beta+1)} + \frac{\gamma}{d}.
\end{equation*}
Furthermore, the continuous dependence on initial data holds.
\end{thm}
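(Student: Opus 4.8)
The plan is to solve \eqref{INLS} by a fixed-point argument for the Duhamel map
\[
\Phi(u)(t) = e^{it\Delta}u_0 - i\lambda\int_0^t e^{i(t-\tau)\Delta}\bigl(|x|^{-\alpha}|u|^\beta u\bigr)(\tau)\,d\tau
\]
on a closed ball of
\[
\mathcal X_T := C([0,T];H^s)\cap L^q\bigl([0,T];L^r(|x|^{-r\gamma})\bigr),
\]
equipped with the distance $d(u,v)=\|u-v\|_{L^q_tL^r_x(|x|^{-r\gamma})}$, for which such a ball is complete. Everything rests on two groups of estimates. The first is a family of weighted Strichartz estimates for the free propagator: the homogeneous bound
\[
\|e^{it\Delta}g\|_{L^q_tL^r_x(|x|^{-r\gamma})}\lesssim \|g\|_{\dot H^s},
\]
whose scaling is exactly the relation $\tfrac2q=d(\tfrac12-\tfrac1r)+\gamma-s$, together with its inhomogeneous companions which (by $TT^\ast$, duality and the Christ--Kiselev lemma for the non-admissible pairs) land the Duhamel integral in $C([0,T];H^s)$ and back in $L^q_tL^r_x(|x|^{-r\gamma})$ when the source is measured in a dual weighted norm $L^{a'}_tL^{b'}_x(|x|^{b'\delta})$. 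The second is a purely spatial weighted H\"older estimate for the nonlinearity.

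I would set these up in that order. Proving the weighted Strichartz estimates is the analytic core and essentially the only place where something genuinely new must be shown: one interpolates the dispersive decay of $e^{it\Delta}$ against the $L^2$ identity and feeds the result through weighted (Stein--Weiss / Pitt type) inequalities, and the range of weight exponents such an argument tolerates is precisely what forces the two displayed windows for $1/r$ and the upper bound $\gamma<\tfrac{d\beta+2\alpha-4s}{2(\beta+1)}$. Granting these, the nonlinear estimate is bookkeeping: writing
\[
|x|^{-\alpha}|u|^\beta u=|x|^{\,\delta-\alpha+(\beta+1)\gamma}\prod_{j=1}^{\beta+1}\bigl(|x|^{-\gamma}u\bigr)
\]
(with the harmless abuse for the conjugated factors), one takes $\tfrac1{b'}=\tfrac{\beta+1}{r}$ and absorbs the leftover power of $|x|$, which pins $\delta=\alpha-(\beta+1)\gamma$; positivity of the residual weight is then the constraint $\gamma<\tfrac{\alpha-s}{\beta+1}$ of the theorem, while its boundedness gives $\gamma>\tfrac{\alpha+s-1}{\beta+1}$. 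H\"older in $t$ then yields
\[
\bigl\||x|^{-\alpha}|u|^\beta u\bigr\|_{L^{a'}_tL^{b'}_x(|x|^{b'\delta})}\lesssim T^{\theta}\,\|u\|_{L^q_tL^r_x(|x|^{-r\gamma})}^{\beta+1}
\]
with $\theta\ge 0$ and $\theta=0$ exactly at $\beta=(4-2\alpha)/(d-2s)$, i.e.\ $s_c=s$; replacing $u$ by $u-v$ and using the pointwise bound for $|z|^\beta z$ gives the matching difference estimate.

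Feeding these into $\Phi$ produces $\|\Phi(u)\|_{\mathcal X_T}\lesssim\|u_0\|_{H^s}+T^{\theta}\|u\|_{\mathcal X_T}^{\beta+1}$ and a matching Lipschitz bound on small balls, so $\Phi$ contracts once $T$ is small (subcritical case, $\theta>0$) or once $\|u_0\|_{H^s}$ is small and $T$ is adjusted accordingly (critical case, $\theta=0$); the fixed point is the asserted solution, uniqueness holds in all of $\mathcal X_T$, and continuous dependence on $u_0$ is read off from the same Lipschitz bound, just as in the proof of Theorem~\ref{Thm1}. The real obstacle is the compatibility problem sitting behind this: one has to exhibit a nonempty parameter region in which \emph{all} the constraints hold at once --- the scaling/admissibility relations $\tfrac2q=d(\tfrac12-\tfrac1r)+\gamma-s$ and $\tfrac{\gamma-s}{2}<\tfrac1q\le\tfrac12$, the coupling $\tfrac1{b'}=\tfrac{\beta+1}{r}$ with $\delta=\alpha-(\beta+1)\gamma$, the Stein--Weiss ranges for all the weights appearing, and, for the $C_tH^s$ endpoint, a second dual weighted estimate coming from the splitting $H^s=L^2\cap\dot H^s$ --- and locating this region is exactly what imposes $0<s<1/3$ together with the stated windows for $\alpha$, $\beta$ and $\gamma$. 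It is also worth stressing that the weighted setting reaches $C_tH^s$ \emph{without} ever differentiating the singular factor $|x|^{-\alpha}$: the $s$ derivatives are converted into weight exponents through scaling rather than distributed onto $|x|^{-\alpha}$ by a fractional Leibniz rule, and this is what makes the critical exponent accessible.
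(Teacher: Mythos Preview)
Your framework matches the paper's --- contraction for the Duhamel map on a ball of $C_tH^s\cap L^q_tL^r_x(|x|^{-r\gamma})$, with weighted Strichartz estimates obtained by interpolation and H\"older for the nonlinearity --- but you misplace one step that is \emph{not} bookkeeping and is in fact where most of the stated constraints originate. To land the Duhamel term in $C_tH^s$ you split $H^s\sim\dot H^s\cap L^2$. The $\dot H^s$ piece is controlled by the dual smoothing estimate
\[
\Bigl\|\,|\nabla|^s\!\int e^{-i\tau\Delta}F\,d\tau\Bigr\|_{L^2}\lesssim\|F\|_{L^{\tilde q_1'}_tL^{\tilde r_1'}_x(|x|^{\tilde r_1'\tilde\gamma_1})}
\]
followed by your H\"older argument, and this gives one time exponent $\theta_1=1-\tfrac{\alpha}{2}-\tfrac{(d-2s)\beta}{4}$ vanishing at criticality. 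But the $L^2$ piece requires bounding $\|\int e^{-i\tau\Delta}F\,d\tau\|_{L^2}$, and applying the same dual estimate (with a \emph{second} triple $(\tilde q_2,\tilde r_2,\tilde\gamma_2)$) reduces this to
\[
\bigl\|\,|\nabla|^{-s}(|x|^{-\alpha}|u|^\beta u)\bigr\|_{L^{\tilde q_2'}_tL^{\tilde r_2'}_x(|x|^{\tilde r_2'\tilde\gamma_2})}.
\]
The negative-order operator $|\nabla|^{-s}$ sitting on the nonlinearity cannot be stripped off by H\"older alone; this is where the Stein--Weiss weighted fractional integration inequality $\||x|^b f\|_{L^q}\lesssim\||x|^a|\nabla|^s f\|_{L^p}$ is actually used in the paper (Lemma~\ref{le3} feeding into \eqref{F2}), \emph{not} in the derivation of the linear Strichartz estimates. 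The admissibility conditions of that inequality, combined with the need to find an intermediate $\tilde r_2$, are precisely what produce the extra lower bound $\tfrac{1}{2(\beta+1)}+\tfrac{2s-\alpha}{d(\beta+1)}+\tfrac{\gamma}{d}<\tfrac1r$, the upper bound $\gamma<\tfrac{d\beta+2\alpha-4s}{2(\beta+1)}$, and ultimately the restriction $s<1/3$. This second nonlinear estimate also carries its own time exponent $\theta_2=\theta_1+\tfrac s2$, so there are two $\theta$'s in play, not one.

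A smaller point: in the critical case $\theta_1=0$ the theorem asserts local existence for \emph{all} $u_0\in H^s$, not just small data. Smallness of $\|u_0\|_{H^s}$ is not what closes the contraction; one instead uses that $\|e^{it\Delta}u_0\|_{L^q([0,T];L^r(|x|^{-r\gamma}))}\to 0$ as $T\to 0$ by dominated convergence (the global-in-time norm being finite by \eqref{T}), so $T$ depends on $u_0$ itself rather than on its norm.
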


\begin{rem}
When $\beta\leq(4-2\alpha)/d$, we already have the well-posedness result in $L^2$ which implies automatically
a result in $H^s$. Hence an essential part in the latter case is when $\beta>(4-2\alpha)/d$.
The range of $\beta$ in the theorem contains this range when $\alpha\geq5s$,
and when $\alpha<5s$ if we further assume $\alpha\geq\frac{5ds-2d+12s}{6s}$.
\end{rem}

One of the most basic tools for the well-posedness of nonlinear dispersive equations is
the contraction mapping principle.
The key ingredient in this argument is the availability of Strichartz estimates.
Indeed, Guzm\'{a}n \cite{GU} makes use of the fractional product and chain rules to derive a contraction from the nonlinearity in \eqref{INLS}
based on the classical Strichartz estimates.
Unfortunately, this approach does not work for the critical case.
In our approach we make use of the weighted Strichartz estimates (see \eqref{T} and \eqref{TT*})
which deal with the singularity $|x|^{-\alpha}$ in the nonlinearity more effectively.
More interestingly, we take advantage of the smoothing estimates (see \eqref{T*} or \eqref{smoothing})
which makes it possible to apply more directly the contraction mapping principle without using the fractional product and chain rules.
As consequences, we can handle the $H^s$-critical cases, and even $u \in L_t^{q}L_x^{r}(|x|^{-r \gamma}))$ in which $L_x^r$ is less restrictive than the usual $H_x^{s,r}$.
Indeed, there are no less restrictive alternatives than $L_x^r$ within the framework of Strichartz estimates.
These aspects are a core of our approach which establishes the well-posedness theory up to the critical case successfully.

\begin{prop}\label{Prop1}
Let $d \geq 3 $ and $0\leq s<1/2$.
Assume that $(q,r)$ and $(\tilde{q}, \tilde{r})$ satisfy
\begin{equation}\label{adm}
\frac{2}{q}=d\left(\frac{1}{2}-\frac{1}{r}\right)+\gamma -s,\quad \frac{\gamma-s}{2}< \frac{1}{q} \leq \frac{1}{2},
\quad \frac{\gamma-s}{2} \leq \frac{1}{r} <\frac{1}{2},
\end{equation}
\begin{equation}\label{adm2}
\frac{2}{\tilde{q}}=d\left(\frac{1}{2}-\frac{1}{\tilde{r}}\right)+\tilde{\gamma}+s,
\quad \frac{\tilde{\gamma}+s}{2}< \frac{1}{\tilde{q}} \leq \frac{1}{2},
\quad \frac{\tilde{\gamma}+s}{2}\leq  \frac{1}{\tilde{r}} < \frac{1}{2},
\end{equation}
for $3s < \gamma < 1+s$ and $s < \tilde{\gamma}<1-s$.
Then we have
\begin{equation}\label{T}	
\big\| e^{it\Delta} f \big\|_{L_t^{q} L_{x}^{r} (|x|^{-r\gamma})} \lesssim \| f\|_{\dot{H}^s},
\end{equation}
\begin{equation}\label{T*}
\bigg\| \,|\nabla|^s\int_{-\infty}^\infty  e^{-i\tau\Delta} F(\cdot, \tau)d\tau \bigg\|_{L^2 } \lesssim \| F \|_{L_t^{\tilde{q}'}L_x^{\tilde{r}'} (|x|^{\tilde{r}'\tilde{\gamma}})},
\end{equation}
and if we additionally assume $q>\tilde{q}'$
\begin{equation}\label{TT*}
\bigg\| \int_{0} ^{t} e^{i(t-\tau)\Delta} F(\cdot,\tau)d\tau \bigg\|_{L_t^{q}L_x^{r} (|x|^{-r\gamma})} \lesssim
\| F \|_{L_t^{\tilde{q}'}L_x^{\tilde{r}'} (|x|^{\tilde{r}'\tilde{\gamma}})}.
\end{equation}
\end{prop}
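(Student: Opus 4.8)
The plan is to derive all three estimates \eqref{T}, \eqref{T*} and \eqref{TT*} from the single homogeneous weighted Strichartz estimate
\[
\big\| e^{it\Delta} f \big\|_{L_t^{q} L_x^{r}(|x|^{-r\gamma})} \lesssim \| f\|_{\dot H^{\sigma}} \qquad (\ast)
\]
valid for $\sigma\in(-\tfrac12,\tfrac12)$ under the scaling identity $2/q=d(\tfrac12-\tfrac1r)+\gamma-\sigma$ together with \eqref{adm} and the range condition $3\sigma<\gamma<1+\sigma$ of Proposition~\ref{Prop1}, all read with $s$ replaced by $\sigma$ and with $3\sigma<\gamma$ understood as $\max\{0,3\sigma\}<\gamma$. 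Granting $(\ast)$: estimate \eqref{T} is exactly $(\ast)$ with $\sigma=s$ (the hypotheses coincide); for \eqref{T*}, the pairing identity
\[
\Big\langle |\nabla|^s\!\int_{-\infty}^\infty e^{-i\tau\Delta}F(\cdot,\tau)\,d\tau,\ g\Big\rangle
= \big\langle F,\ e^{it\Delta}|\nabla|^s g\big\rangle_{t,x}
= \big\langle |x|^{\tilde\gamma}F,\ |x|^{-\tilde\gamma}e^{it\Delta}|\nabla|^s g\big\rangle_{t,x}
\]
shows, by duality, that \eqref{T*} is equivalent to $\big\| |x|^{-\tilde\gamma}e^{it\Delta}|\nabla|^s g\big\|_{L_t^{\tilde q}L_x^{\tilde r}}\lesssim\|g\|_{L^2}$, that is (with $g=|\nabla|^{-s}h$) to $(\ast)$ with $\sigma=-s$, weight $\tilde\gamma$ and exponents $(\tilde q,\tilde r)$, and the hypothesis $s<\tilde\gamma<1-s$ lies inside the corresponding admissible range; finally, writing $\int_{-\infty}^\infty e^{i(t-\tau)\Delta}F\,d\tau=e^{it\Delta}\big(\int_{-\infty}^\infty e^{-i\tau\Delta}F\,d\tau\big)$ and composing \eqref{T} with \eqref{T*} shows that this \emph{untruncated} operator maps $L_t^{\tilde q'}L_x^{\tilde r'}(|x|^{\tilde r'\tilde\gamma})$ boundedly into $L_t^{q}L_x^{r}(|x|^{-r\gamma})$; since $q>\tilde q'$, the Christ--Kiselev lemma upgrades this to the retarded operator $\int_0^t$, which is \eqref{TT*}.

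It remains to prove $(\ast)$. Decompose $f=\sum_N P_N f$ dyadically in frequency. The admissibility conditions force $r>2$, and since $d\geq3$ they force $\gamma<d/r$ as well (from $\tfrac1q\le\tfrac12$ and the scaling identity); hence $|x|^{-r\gamma}$ belongs to the Muckenhoupt class $A_r$, the weighted Littlewood--Paley square function estimate holds in $L^r(|x|^{-r\gamma})$ uniformly in $t$, and together with Minkowski's inequality (using $q,r\geq2$) this reduces $(\ast)$ to the frequency-localized bounds $\big\| |x|^{-\gamma}e^{it\Delta}P_N f\big\|_{L_t^qL_x^r}\lesssim N^{\sigma}\|P_N f\|_{L^2}$. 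By parabolic scaling each of these is equivalent to the single $\sigma$-\emph{free} unit-frequency estimate
\[
\big\| |x|^{-\gamma} e^{it\Delta}\widetilde P_1 f\big\|_{L_t^q L_x^r}\lesssim \|f\|_{L^2},
\]
$\widetilde P_1$ projecting onto $|\xi|\sim1$; the exponent $\sigma$ re-enters only through the scaling identity used in the summation. By the $TT^*$ method this displayed estimate is equivalent to the $L_t^{q'}L_x^{r'}\to L_t^{q}L_x^{r}$ boundedness of
\[
G\longmapsto \int_{-\infty}^\infty |x|^{-\gamma}\,\big(K_{t-\tau}*(|\cdot|^{-\gamma}G(\cdot,\tau))\big)(x)\,d\tau,
\]
where $K_t$ is the (Schwartz, frequency-localized) kernel of $e^{it\Delta}\widetilde P_1^2$, obeying $|K_t(z)|\lesssim\langle t\rangle^{-d/2}(1+|z|/\langle t\rangle)^{-M}$ for every $M$. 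Inserting this pointwise bound, applying the Stein--Weiss inequality in the $x$-variable (legitimate since $0<r\gamma<d$) and the Hardy--Littlewood--Sobolev inequality in the $t$-variable yields the claim, \emph{provided} the time-decay rate thus produced is exactly $2/q$; this balance, and the membership of $(1/q,1/r,\gamma)$ in the appropriate region, is what the conditions \eqref{adm} and $3\sigma<\gamma<1+\sigma$ are designed to encode --- the endpoint $\gamma=1+\sigma$, $q=r=2$ being the homogeneous weighted $L^2_{t,x}$ Kato smoothing estimate $\| |x|^{-\gamma}e^{it\Delta}f\|_{L^2_{t,x}}\lesssim\|f\|_{\dot H^{\gamma-1}}$. (Equivalently, $(\ast)$ can be obtained by complex interpolation between this Kato smoothing estimate and the classical Strichartz estimates.)

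The principal obstacle is exactly this unit-frequency weighted bound. Because the homogeneous weights $|x|^{-\gamma}$ are not admissible multipliers on $L^1$ or $L^\infty$, one cannot merely interpolate the frequency-localized dispersive estimate $L^1\to L^\infty$ against the $L^2$ conservation law; the oscillation of $e^{it\Delta}$ must be retained, and the sharp time-decay exponent --- hence also the ceiling $\gamma<1+\sigma$ --- arises only from the interplay between the Stein--Weiss gain and the dispersive decay of $K_t$. What remains is bookkeeping: checking that the stated ranges of $\gamma,\tilde\gamma$ and of $1/r,1/\tilde r$ are nonempty, that the homogeneous weights that occur lie in the relevant Muckenhoupt classes so that the Stein--Weiss and Littlewood--Paley inequalities apply, and that the exponents $q,\tilde q$ are compatible with the hypothesis $q>\tilde q'$ needed in \eqref{TT*}.
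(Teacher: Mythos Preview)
Your reduction of \eqref{T*} to the homogeneous estimate by duality, and of \eqref{TT*} to the untruncated inhomogeneous estimate via Christ--Kiselev after composing \eqref{T} with \eqref{T*}, is exactly the paper's argument. The difference lies in how you prove the core estimate~$(\ast)$.

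The paper takes the route you relegate to a parenthetical remark: it simply complex-interpolates between the classical Strichartz estimate $\|e^{it\Delta}f\|_{L_t^aL_x^b}\lesssim\|f\|_{L^2}$ and the Kato--Yajima smoothing estimate $\||\nabla|^\rho e^{it\Delta}f\|_{L^2_{t,x}(|x|^{-2(1-\rho)})}\lesssim\|f\|_{L^2}$ (valid for $-\tfrac{d-2}{2}<\rho<\tfrac12$), using the standard identities for complex interpolation of weighted and Sobolev spaces. This is a two-line argument once those two endpoint estimates are quoted, and it produces the admissibility conditions \eqref{adm}, \eqref{adm2} and the ranges $3s<\gamma<1+s$, $s<\tilde\gamma<1-s$ mechanically by eliminating the interpolation parameters.

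Your primary route via Littlewood--Paley, the unit-frequency $TT^*$ reduction, and then ``Stein--Weiss in $x$ plus HLS in $t$'' is genuinely different, and in principle workable, but it is not complete as written. The frequency-localized kernel $K_t$ is not a Riesz potential, so Stein--Weiss does not apply directly to the spatial convolution; one must first trade the rapid decay of $K_t$ for an honest power kernel (or argue region by region), and then check that the resulting time decay is precisely $|t|^{-2/q}$ so that HLS closes. You label this ``bookkeeping,'' but it is in fact the entire technical content of the estimate, and carrying it out would recover exactly the constraints that the paper extracts in one stroke from interpolation. Your approach has the conceptual advantage of explaining \emph{why} the admissible region looks the way it does (dispersion versus the weight), but the paper's interpolation argument is dramatically shorter and leaves nothing to verify.
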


\begin{rem}\label{opl}
Particularly when $s=0$, the above three estimates also hold for the case where $\gamma=0$ and $\tilde{\gamma}=0$,
including $(q,r)=(\infty,2)$ and $(\tilde{q},\tilde{r})=(\infty,2)$.
The first two estimates \eqref{T} and \eqref{T*} also hold for $(q,r)=(2,2)$ with $\gamma=1+s$ and $(\tilde{q},\tilde{r})=(2,2)$ with $\tilde{\gamma}=1-s$, respectively.
Using these cases in our approach, one is led to have the theorems up to the boundary points in some ranges of $q,r,\gamma$.
But this is an insignificant part, and hence we omit the details.
\end{rem}

\begin{rem}
The first conditions in \eqref{adm} and \eqref{adm2} are just the scaling conditions for which the estimates hold.
\end{rem}

It should be also noted that \eqref{T*} is equivalent to
\begin{equation}\label{smoothing}
\big\|\,|\nabla|^s e^{it\Delta} f \big\|_{L_t^{\tilde{q}} L_{x}^{\tilde{r}} (|x|^{-\tilde{r}\tilde{\gamma}})} \lesssim \| f \|_{L^2}
\end{equation}
by duality. This shows the smoothing effect of the Schr\"{o}dinger propagator.
To obtain the well-posedness in $H^s$, $s>0$, we make use of this type of smoothing estimates
because it allows us to have the inhomogeneous estimate \eqref{TT*} without derivative
from which it is possible to apply directly the contraction mapping principle to the nonlinearity in \eqref{INLS}
without using the fractional product and chain rules.
Indeed, by the standard $TT^*$ argument, \eqref{T} and \eqref{T*} with the same $s$ implies \eqref{TT*} without derivative.
In this regard, we need to have the common range of $s$, $0\leq s<1/2$, for which both \eqref{T} and \eqref{T*} hold,
although \eqref{T} holds more widely for $0\leq s<(d-2)/2$ (see Section \ref{sec2} for details).

\

\noindent\textit{Outline of paper.}
In Section \ref{sec2}, we prove Proposition \ref{Prop1} by making
use of the complex interpolation between the classical Strichartz estimates and
the Kato-Yajima smoothing estimates
appealing to the complex interpolation space identities.
In Section \ref{sec3}, we obtain some weighted estimates for the nonlinear term $|x|^{-\alpha}|u|^{\beta}u$ of the INLS equation.
These nonlinear estimates will play a crucial role in later sections \ref{sec4} and \ref{sec5} to obtain the well-posedness results
by applying the contraction mapping argument to the nonlinearity
along with the estimates in Proposition \ref{Prop1}.

Throughout this paper, the letter $C$ stands for a positive constant which may be different
at each occurrence. We also denote $A\lesssim B$ to mean $A\leq CB$
with unspecified constants $C>0$.

\section{Proof of Proposition \ref{Prop1}}\label{sec2}
In this section we prove the estimates in Proposition \ref{Prop1}.
Let $d\geq3$.
We then first recall the classical Strichartz estimate
\begin{equation}\label{clastr}
	\| e^{it\Delta} f \|_{L_t^{a} L_x^{b}} \lesssim \| f \|_{L^{2}}
\end{equation}
which holds if and only if $\frac{2}{a}=d \left( \frac{1}{2}-\frac{1}{b} \right)$ and $2\leq a, b\leq \infty$.
This estimate was first established by Strichartz \cite{St} in the diagonal case $a=b$
and then extended to mixed space-time norms as above (see \cite{GV4, KT}).
It is also necessary for us to make use of the Kato-Yajima smoothing estimate
\begin{equation}\label{weism}
	\| \,|\nabla|^\rho e^{it\Delta} f \|_{L_{t,x}^{2}(|x|^{-2(1-\rho)})} \lesssim \| f \|_{L^{2}}
\end{equation}
which holds if and only if $-\frac{d-2}{2} < \rho < \frac{1}{2}$.
This estimate was first discovered by Kato and Yajima \cite{KY} for $0\leq \rho<\frac12$
and Ben-Artzi and Klainerman \cite{BK} gave an alternate proof of this result.
Since then, the full range was obtained by Sugimoto \cite{Su},
although it was later shown by Vilela \cite{V} that the range is indeed optimal (see also \cite{W}).

Now we make use of the complex interpolation between these two estimates \eqref{clastr} and \eqref{weism}
by appealing to the following complex interpolation space identities (see \cite{BL}).

\begin{lem}\label{idd}
Let $0<\theta<1$, $1\leq p_0,p_1<\infty$ and $s_0,s_1\in\mathbb{R}$.
Then the following identities hold:
\begin{itemize}
\item Given two complex Banach spaces $A_0,A_1$,
$$(L^{p_0}(A_0),L^{p_1}(A_1))_{[\theta]}=L^p((A_0,A_1)_{[\theta]}),$$
and
$$(L^{p_0}(w_0),L^{p_1}(w_1))_{[\theta]}=L^p(w),$$
with $1/p=(1-\theta)/p_0+\theta/p_1$ and $w=w_0^{p(1-\theta)/p_0}w_1^{p\theta/p_1}$.
\item
$$(\dot{H}^{s_0},\dot{H}^{s_1})_{[\theta]}=\dot{H}^s$$
with $s=(1-\theta)s_0+\theta s_1$, $s_0\neq s_1$.
\end{itemize}
Here, $(\cdot\,,\cdot)_{[\theta]}$ denotes the complex interpolation functor.
\end{lem}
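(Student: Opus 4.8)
The statement to prove is Lemma~\ref{idd}, a collection of three standard complex interpolation identities. The plan is to treat each identity as a known consequence of the general theory of complex interpolation on Banach-space-valued $L^p$ spaces, and to assemble the proof from the foundational results in Bergh--L\"ofstr\"om \cite{BL}. I would not reprove the interpolation functor from scratch; instead I would reduce each of the three claims to a citable theorem and supply the short bridging arguments.

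For the first (vector-valued) identity, the key fact is the fundamental theorem on interpolation of $L^p$ spaces with values in a Banach couple: for a compatible couple $(A_0,A_1)$ and $1\le p_0,p_1<\infty$ one has $(L^{p_0}(A_0),L^{p_1}(A_1))_{[\theta]}=L^{p}((A_0,A_1)_{[\theta]})$ with $1/p=(1-\theta)/p_0+\theta/p_1$. I would cite this directly (it is Theorem~5.1.2 in \cite{BL}) and note that the finiteness hypotheses $p_0,p_1<\infty$ are exactly what make the simple functions dense, which is the ingredient the general theorem requires. For the second (weighted scalar) identity, the plan is to deduce it from the first by the standard isometric change of measure. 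The weighted space $L^{p_j}(w_j)$ is isometric to the unweighted $L^{p_j}$ via the multiplication map $f\mapsto w_j^{1/p_j}f$; the only subtlety is that the two weights $w_0,w_1$ differ, so one must check that the induced isometries are compatible across the couple. Writing $g=w_0^{1/p_0}f$ on the first endpoint and $g=w_1^{1/p_1}f$ on the second, a direct computation shows that the interpolation weight comes out to be $w=w_0^{p(1-\theta)/p_0}w_1^{p\theta/p_1}$, matching the claim; I would present this weight-bookkeeping as the one genuine calculation in this part.

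For the third (homogeneous Sobolev) identity, the plan is to use the Fourier-analytic realization $\dot{H}^{s}=\{f: |\nabla|^{s}f\in L^2\}$, so that $|\nabla|^{s}$ is an isometry from $\dot{H}^{s}$ onto $L^2$. Under the Fourier transform, $|\nabla|^{s}$ becomes multiplication by $|\xi|^{s}$, and $\dot{H}^{s_j}$ is isometric to the weighted space $L^2(|\xi|^{2s_j}d\xi)$ on the frequency side. The result $(\dot{H}^{s_0},\dot{H}^{s_1})_{[\theta]}=\dot{H}^{s}$ with $s=(1-\theta)s_0+\theta s_1$ then follows from the weighted identity already established (with $p_0=p_1=2$), which yields the interpolated weight $|\xi|^{2((1-\theta)s_0+\theta s_1)}$; the hypothesis $s_0\neq s_1$ guarantees that the couple is nontrivial and that the scale of spaces is genuinely interpolated rather than collapsing. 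This is again Theorem~6.2.7 (together with the remarks on homogeneous spaces) in \cite{BL}, and I would cite it for completeness.

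\textbf{Main obstacle.} The conceptual content of all three identities is contained in the general interpolation machinery of \cite{BL}, so the hard part is not the abstract theory but the reductions: ensuring the change-of-measure isometries for the weighted case are set up consistently across both endpoints so that the exponents in $w=w_0^{p(1-\theta)/p_0}w_1^{p\theta/p_1}$ come out correctly, and correctly realizing $\dot{H}^{s}$ as a weighted $L^2$ on the Fourier side to subsume the Sobolev identity under the weighted one. Since the paper only needs these as off-the-shelf tools, the cleanest route is to state that all three are standard and cite \cite{BL}, supplying the isometry reductions as brief justifications rather than reproving the complex method.
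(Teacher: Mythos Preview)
Your proposal is correct and in fact more detailed than what the paper does: the paper does not supply a proof of Lemma~\ref{idd} at all, treating it as a standard fact and simply citing \cite{BL}. Your plan of pointing to the specific theorems in \cite{BL} (Theorem~5.1.2 for the vector-valued $L^p$ identity, Theorem~5.5.3 for the weighted version, and the Sobolev interpolation results in Chapter~6) together with the Fourier-side reduction of $\dot{H}^s$ to weighted $L^2$ is exactly the right way to flesh this out, and is consistent with the paper's intent.

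One small remark on your weighted-space argument: using \emph{different} multiplication isometries $f\mapsto w_j^{1/p_j}f$ on the two endpoints does not by itself transport the interpolation couple, since the complex method requires a single map acting compatibly on both spaces. The clean fix is either to cite Theorem~5.5.3 of \cite{BL} directly, or to run the complex method with the analytic family $F(z)=w_0^{(1-z)/p_0}w_1^{z/p_1}f$, which specializes to the correct isometry at $z=0,1$ and yields the weight $w=w_0^{p(1-\theta)/p_0}w_1^{p\theta/p_1}$ at $z=\theta$. This is a minor bookkeeping point and does not affect the correctness of your overall plan.
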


Using the complex interpolation between \eqref{clastr} and \eqref{weism}, we first see
\begin{equation*}
\| e^{it\Delta} f \|_{(L_t^{a}L_x^{b},L_t^{2}L_x^{2}(|x|^{-2(1-\rho)}))_{[\theta]}}
\lesssim \| f \|_{(\dot{H}^0,\dot{H}^{-\rho})_{[\theta]}},
\end{equation*}
and then we make use of Lemma \ref{idd} to get
\begin{equation}\label{inter}
\| e^{it\Delta} f \|_{L_t^{q} L_x^{r}(|x|^{-r\gamma})} \lesssim \left\| f \right\|_{\dot{H}^{-\sigma}}
\end{equation}
where
\begin{equation}\label{cdo}
\frac1q=\frac{1-\theta}{a}+\frac{\theta}{2},\quad \frac1r=\frac{1-\theta}{b}+\frac{\theta}{2},
\quad\gamma=(1-\rho)\theta,\quad \sigma=\rho\theta
\end{equation}
under the conditions
\begin{equation}\label{under}
 2\leq a,b\leq \infty,\ (a,b)\neq(\infty,2),\ \frac{2}{a}=d \left( \frac{1}{2}-\frac{1}{b} \right),\ 0<\theta<1,\ -\frac{d-2}{2} < \rho < \frac{1}{2}.
\end{equation}
By eliminating the redundant exponents $a,b,\rho,\theta$ here,
all the conditions on $q,r,\gamma,\sigma$ for which \eqref{inter} holds are summarized as follows:
\begin{equation}\label{inttt0}
0< \gamma+\sigma < 1,\quad -\frac{\gamma(d-2)}{d} < \sigma < \gamma,
\end{equation}
\begin{equation}\label{intecon0}
\frac{2}{q}=d \left( \frac{1}{2}-\frac{1}{r} \right)+\gamma+\sigma,
\quad \frac{\gamma+\sigma}{2} \leq \frac{1}{q}, \frac{1}{r} \leq \frac{1}{2},\quad(\frac1q,\frac1r)\neq(\frac{\gamma+\sigma}{2},\frac12).
\end{equation}
Indeed, from the last two conditions in \eqref{cdo} it follows that $\theta=\gamma+\sigma$.
The third condition in \eqref{under} implies $2/q=d(1/2-1/r)+\theta$.
Hence the first conditions in \eqref{inttt0} and \eqref{intecon0} are concluded.
Combining the last conditions in \eqref{cdo} and \eqref{under}, we see $-(d-2)\theta/2<\sigma<\theta/2$,
and then substituting  $\theta=\gamma+\sigma$ implies the second condition in \eqref{inttt0}.
Finally, applying the first two conditions in \eqref{under} with $\theta=\gamma+\sigma$ to the first two ones in \eqref{cdo},
the last two conditions in \eqref{intecon0} are concluded.

Now let $s\geq0$.
By substituting $-s$ for $\sigma$ in \eqref{inter}, we obtain the first estimate \eqref{T} in the proposition
because \eqref{intecon0} implies directly the condition \eqref{adm} and
\eqref{inttt0} implies $\frac{d}{d-2}s<\gamma<1+s$ which is wider than $3s<\gamma<1+s$.
On the other hand, we substitute $s$, $\tilde{q}$, $\tilde{r}$, $\tilde{\gamma}$ for $\sigma$, $q$, $r$, $\gamma$,
respectively in \eqref{inter} to obtain \eqref{smoothing} which is equivalent to the second estimate \eqref{T*} in the proposition.
In this case, \eqref{intecon0} implies directly the condition \eqref{adm2}
and it is not difficult to see that \eqref{inttt0} implies the condition $s<\tilde{\gamma}<1-s$.
Hence the common range of $s$ for which both \eqref{T} and \eqref{T*} hold is given by $0\leq s<1/2$.
This is the reason why we choose $3s<\gamma<1+s$ instead of $\frac{d}{d-2}s<\gamma<1+s$ as the range of $\gamma$ for which \eqref{T} holds.

It remains to prove \eqref{TT*}.
By the $TT^*$ argument, \eqref{T} and \eqref{T*} with the same value of $s$ imply
$$\bigg\| \int_{-\infty}^\infty e^{i(t-\tau)\Delta} F(\cdot,\tau)d\tau \bigg\|_{L_t^{q}L_x^{r} (|x|^{-r\gamma})} \lesssim
\| F \|_{L_t^{\tilde{q}'}L_x^{\tilde{r}'} (|x|^{\tilde{r}'\tilde{\gamma}})}. $$
By applying the following Christ-Kiselev lemma \cite{CK}, we now get \eqref{TT*} for $q>\tilde{q}'$ as desired.

\begin{lem}
Let $X$ and $Y$ be two Banach spaces and let $T$ be a bounded linear operator from $L^\alpha(\mathbb{R};X)$ to $L^\beta(\mathbb{R};Y)$
such that
$$Tf(t)=\int_{\mathbb{R}} K(t,s)f(s)ds.$$
Then the operator
$$\widetilde{T}f(t)=\int_{-\infty}^t K(t,s)f(s)ds$$
has the same boundedness when $\beta>\alpha$, and $\|\widetilde{T}\|\lesssim\|T\|$.
\end{lem}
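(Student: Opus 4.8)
The plan is to establish this lemma by the classical dyadic (Whitney-type) decomposition of the causal region $\{(t,s):s<t\}$, organized according to the $L^{\alpha}$-mass distribution of $f$; this is the argument of Christ and Kiselev, and it delivers the bound $\|\widetilde{T}\|\lesssim\|T\|$ with a constant depending only on $\alpha,\beta$. By homogeneity one may normalize $\|f\|_{L^{\alpha}(\mathbb{R};X)}=1$, and by a routine density reduction one may assume $f$ is as regular as needed (say a compactly supported simple function valued in $X$) so that writing $\widetilde{T}f(t)=\int_{s<t}K(t,s)f(s)\,ds$ and rearranging the integral below are legitimate; the estimate for general $f$ then follows by continuity once $\|\widetilde{T}\|\lesssim\|T\|$ is in hand.

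First I would introduce the finite measure $\mu$ on $\mathbb{R}$ with density $\|f(s)\|_{X}^{\alpha}$. Since $\mu$ is absolutely continuous, it is non-atomic, so its cumulative distribution $F(t)=\mu((-\infty,t])$ is continuous, nondecreasing, and runs from $0$ to $1$. For each dyadic scale $j\ge0$ I would choose cut points $a_{j,k}=\inf\{t:F(t)\ge k2^{-j}\}$, $k=0,1,\dots,2^{j}$, so that the consecutive intervals between them each carry $\mu$-mass exactly $2^{-j}$, and I would organize these $2^{j}$ intervals into a binary tree, pairing, at each scale and under each common parent, the left child (for the $s$-variable) with the right child (for the $t$-variable). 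This yields a decomposition
\[
\mathbf{1}_{\{s<t\}}=\sum_{j\ge1}\sum_{k}\mathbf{1}_{B_{j,k}}(t)\,\mathbf{1}_{A_{j,k}}(s)
\]
with three properties: for each fixed $j$ the intervals $\{A_{j,k}\}_{k}$ are pairwise disjoint and the $\{B_{j,k}\}_{k}$ are pairwise disjoint; every point of $A_{j,k}$ lies below every point of $B_{j,k}$ (so $\mathbf{1}_{B_{j,k}}(t)K(t,s)\mathbf{1}_{A_{j,k}}(s)$ only sees the region $s<t$); and $\mu(A_{j,k})\le2^{-j}$, with at most $2^{j}$ nonempty pieces at scale $j$. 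Consequently
\[
\widetilde{T}f(t)=\sum_{j\ge1}\sum_{k}\mathbf{1}_{B_{j,k}}(t)\,T\big(\mathbf{1}_{A_{j,k}}f\big)(t).
\]

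The estimate is then carried out one scale at a time. For fixed $j$, the disjointness of the $B_{j,k}$ in the $t$-variable gives
\[
\Big\|\sum_{k}\mathbf{1}_{B_{j,k}}\,T(\mathbf{1}_{A_{j,k}}f)\Big\|_{L^{\beta}(Y)}^{\beta}=\sum_{k}\big\|\mathbf{1}_{B_{j,k}}\,T(\mathbf{1}_{A_{j,k}}f)\big\|_{L^{\beta}(Y)}^{\beta}\le\|T\|^{\beta}\sum_{k}\|\mathbf{1}_{A_{j,k}}f\|_{L^{\alpha}(X)}^{\beta}.
\]
Writing $x_{k}=\|\mathbf{1}_{A_{j,k}}f\|_{L^{\alpha}(X)}$, the disjointness of the $A_{j,k}$ gives $\sum_{k}x_{k}^{\alpha}\le\|f\|_{L^{\alpha}(X)}^{\alpha}=1$, while the mass property gives $\max_{k}x_{k}^{\alpha}\le2^{-j}$; since $\beta\ge\alpha$,
\[
\sum_{k}x_{k}^{\beta}\le\big(\max_{k}x_{k}^{\alpha}\big)^{(\beta-\alpha)/\alpha}\sum_{k}x_{k}^{\alpha}\le2^{-j(\beta-\alpha)/\alpha}.
\]
Hence $\big\|\sum_{k}\mathbf{1}_{B_{j,k}}T(\mathbf{1}_{A_{j,k}}f)\big\|_{L^{\beta}(Y)}\lesssim\|T\|\,2^{-j(\beta-\alpha)/(\alpha\beta)}$, and summing the geometric series in $j$ — which converges \emph{precisely because} $\beta>\alpha$ — yields $\|\widetilde{T}f\|_{L^{\beta}(Y)}\lesssim\|T\|\,\|f\|_{L^{\alpha}(X)}$. (When $\beta=\infty$ one replaces the $\ell^{\beta}_{k}$ sums by suprema and the summation exponent by $1/\alpha$, which still converges since $\alpha<\infty$.) Feeding the full-line operator $F\mapsto\int_{-\infty}^{\infty}e^{i(t-\tau)\Delta}F\,d\tau$ into this then produces \eqref{TT*}.

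The main obstacle is the combinatorial bookkeeping in the second step: one must build the pairs $(A_{j,k},B_{j,k})$ so that properties (i)–(iii) hold simultaneously across all scales, discarding the intervals on which $F$ is constant (where $f$ vanishes a.e.) and verifying that the number of nonempty pieces at scale $j$ is $O(2^{j})$, which is exactly what makes the two counting facts $\sum_{k}x_{k}^{\alpha}\le1$ and $\max_{k}x_{k}^{\alpha}\le2^{-j}$ interact to give the decaying bound $2^{-j(\beta-\alpha)/\alpha}$. A secondary technical point is the very first reduction: the identity $\widetilde{T}f(t)=\int_{s<t}K(t,s)f(s)\,ds=\sum_{j,k}\mathbf{1}_{B_{j,k}}(t)\,T(\mathbf{1}_{A_{j,k}}f)(t)$ has to be justified on a dense class, since $K(t,s)$ need not act as an honest pointwise kernel between $X$ and $Y$ for the exponents arising in our application; once the closed estimate is established there, $\widetilde{T}$ extends to the full space by continuity.
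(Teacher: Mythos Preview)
The paper does not prove this lemma; it simply quotes it as the Christ--Kiselev lemma with a citation to \cite{CK} and immediately applies it to pass from the full-line estimate to the retarded one. Your proposal reproduces the standard Christ--Kiselev dyadic decomposition argument (based on the $L^{\alpha}$-mass distribution of $f$) and is correct in outline; the key inequality $\sum_{k}x_{k}^{\beta}\le 2^{-j(\beta-\alpha)/\alpha}$ and the ensuing geometric summation are exactly what drive the condition $\beta>\alpha$. Since the paper offers no proof to compare against, there is nothing further to say beyond noting that your sketch matches the original source the paper cites.
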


In particular, if $s=0$, the estimate \eqref{T} for the case $\gamma=0$ is just the same as \eqref{clastr},
and \eqref{T*} for the case $\tilde{\gamma}=0$ follows from its dual estimate.
By the $TT^*$ argument together with these cases, \eqref{TT*} follows for the case where $s=0$ and $\gamma,\tilde{\gamma}=0$.
It follows also directly from \eqref{weism} that \eqref{T} and \eqref{T*} hold for $(q,r)=(2,2)$ with $\gamma=1+s$ and $(\tilde{q},\tilde{r})=(2,2)$ with $\tilde{\gamma}=1-s$, respectively.
This shows Remark \ref{opl}.

\section{Nonlinear estimates}\label{sec3}
In this section we obtain some weighted estimates for the nonlinearity of the INLS equation
using the same spaces as those involved in the weighted Strichartz estimates in Proposition \ref{Prop1}.
These nonlinear estimates will play a crucial role in later sections to obtain the well-posedness results
applying the contraction mapping principle.

\subsection{The $L^2$ case}
We first establish the nonlinear estimates which are necessary for us to obtain the well-posedness in $L^2$
in the next section.

\begin{lem}\label{le1}
Let $d\ge 3$, $0< \alpha < 2 $ and $0 < \beta \leq (4-2\alpha)/d$.
Assume that the exponents\, $q,r,\gamma$ satisfy all the conditions given as in Theorem \ref{Thm1}.
Then there exist certain $\tilde{q},\tilde{r},\tilde{\gamma}$ satisfying all the conditions given as in Proposition \ref{Prop1} with $s=0$
for which
\begin{align}
\label{Festi}
\left\| |x|^{-\alpha} |u|^{\beta} v \right\|_{L_t^{\tilde{q}'}(I ; L_{x}^{\tilde{r}'}(|x|^{\tilde{r}' \tilde{\gamma}}))}
\leq T^{\theta_0} \left\|  u \right\|_{L_t^{q}(I; L_{x}^{r}(|x|^{-r\gamma}))} ^{\beta} \left\|  v \right\|_{L_t^{q}(I; L_{x}^{r}(|x|^{-r\gamma}))}
\end{align}
holds for any finite interval $I=[0,T]$ and
\begin{equation}\label{th1}
\theta_0 = -\frac{d\beta}{4} + 1 - \frac{\alpha}{2}.
\end{equation}
\end{lem}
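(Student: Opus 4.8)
The plan is to estimate the left-hand side by a single application of Hölder's inequality in the spatial variable (together with the weights) and then in the time variable, producing the factor $T^{\theta_0}$ from the time Hölder exponent. First I would fix the spatial exponents. On the left we have the weight $|x|^{-\alpha}$ coming from the equation together with the weight $|x|^{\tilde r'\tilde\gamma}$ built into the norm, while on the right each factor carries the weight $|x|^{-r\gamma}$. Writing $|x|^{-\alpha}|u|^\beta v$ and distributing, the natural split is to put $u$ in $L_x^r(|x|^{-r\gamma})$ for each of the $\beta$ copies and $v$ in $L_x^r(|x|^{-r\gamma})$, which forces
$$\frac{1}{\tilde r'}=\frac{\beta+1}{r}+\frac{1}{\rho}$$
for some remaining exponent $\rho$, and the weights must balance as
$$\frac{\tilde\gamma}{\tilde r'}=-\frac{\alpha}{\text{(something)}}-(\beta+1)\frac{\gamma}{r}+\cdots,$$
more precisely the homogeneity of $|x|$ on both sides has to match: the left carries total power $-\alpha+\tilde\gamma$ in the $L^{\tilde r'}$-sense, the right carries $-(\beta+1)\gamma$, and the discrepancy is absorbed by a pure power weight $|x|^{-\mu}$ paired against Lebesgue exponent $\rho$ via Hölder. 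Choosing $\tilde r$ (equivalently $\tilde r'$) appropriately, this residual factor $\| |x|^{-\mu}\|_{L^\rho(\text{unit considerations})}$ must be finite, which is exactly where the constraints $\gamma>\frac{\alpha-1}{\beta+1}$ and $\gamma<\frac{\alpha}{\beta+1}$ (and $0<\gamma<1$) enter: they guarantee $\mu\rho<d$ near the origin and $\mu\rho>d$ near infinity, or rather — since there is no spatial cutoff — that the leftover power can be distributed among the $\beta+1$ factors so that each piece lives in a legitimate weighted Lebesgue space with the prescribed $-r\gamma$ weight. I would carry this out by interpolating the pure power weight into the existing weights rather than as a standalone factor, i.e. by slightly perturbing the exponent $r$ on the right within the open range allowed by Theorem \ref{Thm1}; the strict inequalities in the hypotheses on $1/r$ are there precisely to give this room.

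Next I would handle the time variable. After the spatial Hölder step we are left with $\| \|u\|_{L_x^r(|x|^{-r\gamma})}^\beta \|v\|_{L_x^r(|x|^{-r\gamma})}\|_{L_t^{\tilde q'}(I)}$, and I apply Hölder in $t$ with exponents chosen so that $u$ and $v$ each land in $L_t^q(I)$: this requires
$$\frac{1}{\tilde q'}=\frac{\beta+1}{q}+\frac{1}{k},\qquad \frac1k=\frac{1}{\tilde q'}-\frac{\beta+1}{q},$$
and the finite interval $I=[0,T]$ contributes $\|1\|_{L_t^k(I)}=T^{1/k}$. Then $\theta_0=1/k$, and a computation using the two scaling relations $\frac2q=d(\frac12-\frac1r)+\gamma$ and $\frac{2}{\tilde q}=d(\frac12-\frac1{\tilde r})+\tilde\gamma$ together with the chosen relations among $(r,\tilde r,\gamma,\tilde\gamma)$ should collapse to $\theta_0=-\frac{d\beta}{4}+1-\frac{\alpha}{2}$ exactly as claimed; note this is $\geq 0$ precisely because $\beta\leq(4-2\alpha)/d$, with equality in the $L^2$-critical case, matching the earlier remark that the critical life span does not depend on the norm. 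For this step to be legitimate I need $k>0$, i.e. $\tilde q'<q/(\beta+1)$, equivalently $q>\tilde q'$ after accounting for the $\beta+1$ factors — consistent with the extra hypothesis $q>\tilde q'$ needed for \eqref{TT*} in Proposition \ref{Prop1}.

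The remaining task, and the one I expect to be the real obstacle, is to verify that the exponents $(\tilde q,\tilde r,\tilde\gamma)$ produced by the above bookkeeping genuinely satisfy \emph{all} the admissibility conditions \eqref{adm2} of Proposition \ref{Prop1} with $s=0$: the scaling identity, $\frac{\tilde\gamma}{2}<\frac1{\tilde q}\le\frac12$, $\frac{\tilde\gamma}{2}\le\frac1{\tilde r}<\frac12$, and $0<\tilde\gamma<1$. This is a feasibility/Farkas-type check: one must show the polytope cut out by these inequalities, after substituting the forced linear relations, is nonempty exactly when $q,r,\gamma$ range over the (open) region described in Theorem \ref{Thm1}. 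I would organize this by first solving for $\tilde\gamma$ and $1/\tilde r$ in terms of $\alpha,\beta,\gamma$ and the free parameter left after the two Hölder steps, then translating each of the five constraints above into an inequality on $1/r$ (or on $\gamma$), and checking that the resulting system is implied by the hypotheses
$$\tfrac{1}{2(\beta+1)}<\tfrac1r<\tfrac{d-2(\alpha-1)}{2d(\beta+1)}+\tfrac\gamma d,\qquad \max\{0,\tfrac{\alpha-1}{\beta+1}\}<\gamma<\min\{1,\tfrac{\alpha}{\beta+1}\},\qquad \tfrac\gamma2<\tfrac1q\le\tfrac12.$$
I expect the binding constraints to be the lower bound on $1/\tilde r$ versus $\tilde\gamma/2$ (controlled by the upper bound on $1/r$) and the positivity $\tilde\gamma>0$ together with $\tilde\gamma<1$ (controlled by the two-sided bound on $\gamma$); the condition $d\geq3$ is needed so that $d/(d-2)$-type slack is available and so that the power weights are integrable in the right dimension ranges. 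Once this feasibility is established the lemma follows, with the continuity-in-$T$ gain $T^{\theta_0}$ as stated.
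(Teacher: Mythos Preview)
Your overall architecture---H\"older in space, H\"older in time to extract $T^{\theta_0}$, then verify that the resulting $(\tilde q,\tilde r,\tilde\gamma)$ lands in the admissible region of Proposition~\ref{Prop1}---is exactly the paper's approach. But you have overcomplicated the spatial step in a way that, taken literally, does not work.

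You introduce a residual exponent $\rho$ via $\frac{1}{\tilde r'}=\frac{\beta+1}{r}+\frac{1}{\rho}$ and a residual weight $|x|^{-\mu}$, then worry about $\||x|^{-\mu}\|_{L^\rho}$ being finite. As you yourself note, there is no spatial cutoff, so a pure power is never in $L^\rho(\mathbb{R}^d)$ for finite $\rho$ and $\mu\neq 0$. Your fallback---``perturbing the exponent $r$ on the right within the open range''---is not permitted either: $r$ is \emph{given} by hypothesis, and the lemma asserts existence of $(\tilde q,\tilde r,\tilde\gamma)$ for that fixed $r$.

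The clean resolution, and what the paper actually does, is to exercise the freedom in the \emph{output} exponents rather than the input: simply take
\[
\frac{1}{\tilde r'}=\frac{\beta+1}{r},\qquad \tilde\gamma=\alpha-\gamma(\beta+1),\qquad \frac{1}{\tilde q'}=\theta_0+\frac{\beta+1}{q},
\]
so that there is no residual at all. The spatial H\"older step is then a one-liner, $\theta_0$ drops out of the two scaling relations exactly as you anticipated, and the entire content of the lemma becomes the feasibility check you describe. That check is indeed the substance: one substitutes the relations above into \eqref{adm2} with $s=0$ and into $q>\tilde q'$, and shows that the resulting inequalities on $(q,r,\gamma)$ are precisely those listed in Theorem~\ref{Thm1}. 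In particular, $0<\tilde\gamma<1$ gives the two-sided bound on $\gamma$, and the range $\frac{\tilde\gamma}{2}<\frac{1}{\tilde q}\le\frac12$ (after substitution) gives the upper bound on $1/r$; the condition $q>\tilde q'$ turns out to be implied by the others, so no extra constraint appears.
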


\begin{rem}
It should be noted that $\theta_0\geq0$ if and only if $\beta\leq(4-2\alpha)/d$.
\end{rem}

\begin{proof}[Proof of Lemma \ref{le1}]
Let us first consider the exponent pairs $(q, r,\gamma)$ and $(\tilde{q}, \tilde{r},\widetilde{\gamma})$
satisfying $q>\tilde{q}'$,
\begin{equation}\label{r}
 \frac{2}{q} = d(\frac{1}{2} - \frac{1}{r}) +\gamma,
 \quad \frac{\gamma}{2} < \frac{1}{q} \leq \frac{1}{2},\quad\frac{\gamma}{2} \leq \frac{1}{r} <\frac{1}{2},
 \quad 0< \gamma < 1,
\end{equation}
\begin{equation}\label{r2}
 \frac{2}{\tilde{q}} = d(\frac{1}{2} - \frac{1}{\tilde{r}}) + \tilde{\gamma},
 \quad \frac{\tilde{\gamma}}{2} < \frac{1}{\tilde{q}} \leq \frac{1}{2},
 \quad\frac{\tilde{\gamma}}{2} \leq \frac{1}{\tilde{r}} < \frac{1}{2},
 \quad 0< \tilde{\gamma} < 1,
\end{equation}
which are just the same given as in Proposition \ref{Prop1} with $s=0$ for which the estimates therein hold.
We then let
\begin{equation}\label{setting}
 \frac{1}{\tilde{q}'}= \theta_0 + \frac{\beta+1}{q},\quad \frac{1}{\tilde{r}'}=\frac{\beta+1}{r} \quad \textnormal{and} \quad \tilde{\gamma}-\alpha=-\gamma(\beta+1).
\end{equation}
By combining the first conditions in \eqref{r} and \eqref{r2} together with \eqref{setting},
it is not difficult to see that $\theta_0$ is determined by
\begin{equation*}
\theta_0 = \frac{1}{\tilde{q}'} - \frac{\beta+1}{q} = -\frac{d\beta}{4} + 1 - \frac{\alpha}{2}
\end{equation*}
as in \eqref{th1}.

Next we use H\"{o}lder's inequality repeatedly along with \eqref{setting} to get
\begin{align*}
\left\| |x|^{-\alpha} |u|^{\beta} v \right\|_{L_t^{\tilde{q}'}(I ; L_{x}^{\tilde{r}'}(|x|^{\tilde{r}' \tilde{\gamma}}))}
&=\left\||x|^{\tilde{\gamma}-\alpha}|u|^{\beta} v  \right\|_{L_t^{\tilde{q}'}(I;L_x^{\tilde{r}'})} \\
& = \left\| |x|^{-\gamma(\beta+1)} |u|^{\beta} v \right\|_{L_t^{\tilde{q}'}(I ; L_{x}^{\frac{r}{\beta+1}})} \\
& \leq  T^{\theta_0} \left\|  |x|^{-\gamma(\beta+1)} |u|^{\beta} v \right\|_{L_t^{\frac{q}{\beta+1}}(I ; L_{x}^{\frac{r}{\beta+1}})} \\
& \leq T^{\theta_0} \left\|  u \right\|_{L_t^{q}(I; L_{x}^{r} (|x|^{-r\gamma}))}^{\beta} \left\| v \right\|_{L_t^{q}(I; L_{x}^{r}(|x|^{-r\gamma}))}
\end{align*}
as desired in \eqref{Festi}.

Now it remains to show the requirements on $q$, $r$ and $\gamma$ for which \eqref{Festi} holds.
We first use the last two conditions in \eqref{setting} to transform the exponents $\tilde{r}$ and $\tilde{\gamma}$ in \eqref{r2} to $r$ and $\gamma$, as follows:
\begin{equation}\label{r11}
\frac{2}{\tilde{q}} = d(\frac{\beta+1}{r}-\frac{1}{2}) + \alpha-\gamma(\beta+1),
\end{equation}
\begin{equation}\label{r111}
\frac{\alpha-\gamma(\beta+1)}{2} < \frac{1}{\tilde{q}}\leq \frac{1}{2},\quad \frac{1}{2(\beta+1)} < \frac{1}{r} \leq \frac{2-\alpha}{2(\beta+1)}+\frac{\gamma}{2},
\end{equation}
and
\begin{equation}\label{rrt}
0< \alpha-\gamma(\beta+1) < 1.
\end{equation}
Next we substitute \eqref{r11} into the first one in \eqref{r111} to get
\begin{equation}\label{r11'}
  \frac{1}{2(\beta+1)} < \frac{1}{r} \leq \frac{d-2(\alpha-1)}{2d(\beta+1)} +\frac{\gamma}{d}.
\end{equation}
Similarly we get
\begin{equation}\label{rlower}
 \frac{d-2}{2d} + \frac{\gamma}{d} \leq \frac{1}{r} < \frac{1}{2}
\end{equation}
by substituting the first one into the second one in \eqref{r}.
Notice here that the range of $1/r$ in \eqref{r} may be replaced by \eqref{rlower}
because $\frac{\gamma}{2}\leq\frac{d-2}{2d}+\frac{\gamma}{d}$ when $\gamma<1$,
and similarly using \eqref{rrt}, it is easy to see that the second one in \eqref{r111} may be replaced by \eqref{r11'}.
Finally, using the last two conditions in \eqref{setting} and the first conditions in \eqref{r} and \eqref{r2},
one can easily see that $q>\tilde{q}'$ is transformed to
$\frac1r<\frac{2-\alpha}{d\beta}+\frac{\gamma}{d}$,
but this may be replaced by \eqref{r11'}.
From the first one in \eqref{setting}, we also require $1/\tilde{q}'-1/q=\theta_0+\beta/q>0$,
but this is satisfied since $\frac{1}{q}>\frac{\gamma}{2}>0$.

Hence all the conditions on $q$, $r$ and $\gamma$ which we require are summarized as follows:
\begin{equation*}
\frac{2}{q} = d(\frac{1}{2} - \frac{1}{r}) + \gamma,\quad \frac{\gamma}{2} < \frac{1}{q} \leq \frac{1}{2},
\end{equation*}
\begin{equation*}
\frac{1}{2(\beta+1)}<\frac{1}{r}
<\frac{d-2(\alpha-1)}{2d(\beta+1)}+\frac{\gamma}{d},
\end{equation*}
and
\begin{equation*}
\max \{0,\ \frac{\alpha-1}{\beta+1} \} < \gamma < \min \{ 1, \frac{\alpha}{\beta+1} \},
\end{equation*}
which are the same as in Theorem \ref{Thm1}.
\end{proof}

\subsection{The $H^s$ case}
Now we obtain some weighted estimates for the nonlinear term which will be used for the well-posedness in $H^s$ in Section \ref{sec5}.
In comparison with the previous lemma, it is more delicate to obtain Lemma \ref{le2} below in which
we need to get an additional estimate \eqref{F2} having the term $|\nabla|^{-s}$ that is quite difficult to handle in the weighted spaces.
For this, we will make use of a weighted version of the Sobolev embedding.

\begin{lem}\label{le2}
Let $d \geq 3$ and $0<s<1/3$. Assume that
\begin{equation}\label{102}
\max\{\frac{26-3d}{12},\frac{12s+4ds-8s^2}{d+4s}\}<\alpha<2
\end{equation}
and
\begin{equation}\label{1012}
\max\{0,\frac{10s-2\alpha}{d-6s}\}< \beta \leq \frac{4-2\alpha}{d-2s}.
\end{equation}
If the exponents $q,r,\gamma$ satisfy all the conditions given as in Theorem \ref{Thm2},
then there exist certain $\tilde{q}_i,\tilde{r}_i,\tilde{\gamma}_i$, $i=1,2$, satisfying all the conditions given as in Proposition \ref{Prop1} with $s >0$ for which
\begin{equation}\label{F1}
\left \||x|^{-\alpha} |u|^{\beta} v \right\|_{L_t^{\tilde{q}_1'}(I;L_x^{\tilde{r}_1'}(|x|^{\tilde{r}_1'\tilde{\gamma}_1}))}\lesssim T^{\theta_1} \|u\|_{L_t^q(I;L_x^r(|x|^{-r \gamma}))}^{\beta}\|v\|_{L_t^q(I;L_x^r(|x|^{-r \gamma}))}
\end{equation}
and
\begin{align}\label{F2}
\nonumber\big\| |\nabla|^{-s} (|x|^{-\alpha} |u|^{\beta} v)&\big\|_{L_t^{\tilde{q}_2'}(I;L_x^{\tilde{r}_2'}(|x|^{\tilde{r}_2'\tilde{\gamma}_2}))}\\
 &\lesssim T^{\theta_2} \|u\|_{L_t^q(I;L_x^r(|x|^{-r \gamma}))}^{\beta} \|v\|_{L_t^q(I;L_x^r(|x|^{-r \gamma}))}
\end{align}
hold for any finite interval $I=[0, T]$ and
\begin{equation}\label{th2}
\theta_1= -\frac{d\beta}{4}+1-\frac{\alpha}{2}+\frac{s\beta}{2},\quad  \quad \theta_2=-\frac{d\beta}{4} +1-\frac{\alpha}{2} +\frac{s(\beta+1)}{2}.
\end{equation}
\end{lem}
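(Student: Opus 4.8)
The plan is to prove both \eqref{F1} and \eqref{F2} by the same mechanism that proved Lemma \ref{le1}: pick the auxiliary exponents $\tilde q_i,\tilde r_i$ and weight powers $\tilde\gamma_i$ so that, after peeling off the singular weight $|x|^{-\alpha}$, the nonlinearity factors into $\beta+1$ copies of the $L_t^qL_x^r(|x|^{-r\gamma})$ norm via repeated H\"older; the constants $\theta_1,\theta_2$ then drop out of the scaling relations, and the only serious work is to verify that admissibility \eqref{adm2} for $(\tilde q_i,\tilde r_i,\tilde\gamma_i)$, the condition $q>\tilde q_1'$ needed later to invoke \eqref{TT*}, and the side conditions of the weighted Sobolev inequality used below all collapse onto the hypotheses \eqref{102}, \eqref{1012} of Theorem \ref{Thm2} and the stated ranges of $\gamma,q,r$. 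For \eqref{F2} the single new ingredient is a weighted (Stein--Weiss) version of the Sobolev embedding, used to dispose of $|\nabla|^{-s}$ before the H\"older chain begins.

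\medskip\noindent\textit{Estimate \eqref{F1}.} As in \eqref{setting}, I would set $1/\tilde r_1'=(\beta+1)/r$ and $\tilde\gamma_1=\alpha-\gamma(\beta+1)$, and let $1/\tilde q_1'$ be fixed by the scaling condition of \eqref{adm2}; equivalently $1/\tilde q_1'=\theta_1+(\beta+1)/q$, and a short computation with the scaling relations in \eqref{adm} and \eqref{adm2} gives $\theta_1=-d\beta/4+1-\alpha/2+s\beta/2$ as claimed, with $\theta_1\ge0\iff\beta\le(4-2\alpha)/(d-2s)$. Then, exactly as in Lemma \ref{le1},
\[
\bigl\||x|^{-\alpha}|u|^{\beta}v\bigr\|_{L_t^{\tilde q_1'}(I;L_x^{\tilde r_1'}(|x|^{\tilde r_1'\tilde\gamma_1}))}
=\bigl\||x|^{-\gamma(\beta+1)}|u|^{\beta}v\bigr\|_{L_t^{\tilde q_1'}(I;L_x^{r/(\beta+1)})}
\le T^{\theta_1}\|u\|_{L_t^q(I;L_x^r(|x|^{-r\gamma}))}^{\beta}\|v\|_{L_t^q(I;L_x^r(|x|^{-r\gamma}))},
\]
by weighted H\"older in $x$ (the weights balance because $\tilde\gamma_1-\alpha=-\gamma(\beta+1)$ and $(\beta+1)/r=\beta/r+1/r$) followed by H\"older in $t$ on $I=[0,T]$, legitimate since $\theta_1\ge0$. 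It then remains to translate the constraints of \eqref{adm2} on $(\tilde q_1,\tilde r_1,\tilde\gamma_1)$ together with $q>\tilde q_1'$ into constraints on $(q,r,\gamma)$: $s<\tilde\gamma_1<1-s$ becomes $\tfrac{\alpha+s-1}{\beta+1}<\gamma<\tfrac{\alpha-s}{\beta+1}$, the condition $1/\tilde r_1<1/2$ becomes $1/r>\tfrac1{2(\beta+1)}$, and $1/\tilde q_1\le1/2$ yields the upper bound on $1/r$ displayed in Theorem \ref{Thm2}, just as \eqref{r111}--\eqref{r11'} were extracted in the proof of Lemma \ref{le1}.

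\medskip\noindent\textit{Estimate \eqref{F2}.} Since $0<s<1/3<d$, write $|\nabla|^{-s}=c_{d,s}I_s$ with $I_s$ the Riesz potential, and invoke the Stein--Weiss weighted fractional integration inequality in the form
\[
\bigl\||x|^{-b}\,|\nabla|^{-s}h\bigr\|_{L_x^{\ell}}\lesssim\bigl\||x|^{a}h\bigr\|_{L_x^{m}},\qquad \tfrac1\ell=\tfrac1m+\tfrac{a+b-s}{d},
\]
valid for $1<m\le\ell<\infty$, $a+b\ge0$, $a<d/m'$, $b<d/\ell$. I would apply it with $h=|x|^{-\alpha}|u|^{\beta}v$, $\ell=\tilde r_2'$, $1/m=(\beta+1)/r$, $b=-\tilde\gamma_2$ and $a=\alpha-\gamma(\beta+1)\,(=\tilde\gamma_1)$, so that $|x|^{a}h=|x|^{-\gamma(\beta+1)}|u|^{\beta}v$ and the right-hand side is $\bigl\||x|^{-\gamma(\beta+1)}|u|^{\beta}v\bigr\|_{L_x^{r/(\beta+1)}}\le\|u\|_{L_x^r(|x|^{-r\gamma})}^{\beta}\|v\|_{L_x^r(|x|^{-r\gamma})}$ by plain H\"older. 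The scaling relation $1/\tilde r_2'=(\beta+1)/r+(\tilde\gamma_1-\tilde\gamma_2-s)/d$ together with the scaling condition of \eqref{adm2} for $(\tilde q_2,\tilde r_2,\tilde\gamma_2)$ pins down $\tilde q_2$, and, as for $\theta_1$, forces $\theta_2:=1/\tilde q_2'-(\beta+1)/q=-d\beta/4+1-\alpha/2+s(\beta+1)/2$, with $\theta_2=\theta_1+s/2\ge s/2>0$; H\"older in $t$ then produces the factor $T^{\theta_2}$ and gives \eqref{F2}. The Stein--Weiss hypotheses read here $1<m\iff1/r<1/(\beta+1)$ (which holds under Theorem \ref{Thm2}), $m\le\ell\iff\tilde\gamma_2\ge\tilde\gamma_1-s$, $a+b\ge0\iff\tilde\gamma_2\le\tilde\gamma_1$, $a<d/m'\iff\alpha-\gamma(\beta+1)<d-d(\beta+1)/r$, and $b<d/\ell$ (automatic as $b<0$); since $\tilde\gamma_1>s$ from the first step, the window $\max\{s,\tilde\gamma_1-s\}\le\tilde\gamma_2\le\min\{1-s,\tilde\gamma_1\}$ permitted by \eqref{adm2} is non-empty, and choosing $\tilde\gamma_2$ near $s$ (when $\tilde\gamma_1\le2s$) or near $\tilde\gamma_1-s$ (when $\tilde\gamma_1>2s$) and converting the remaining \eqref{adm2} bounds produces exactly the lower bound $1/r>\max\{\tfrac1{2(\beta+1)},\tfrac1{2(\beta+1)}+\tfrac{2s-\alpha}{d(\beta+1)}+\tfrac\gamma d\}$.

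\medskip\noindent\textit{Collecting the constraints.} Finally I would intersect all the conditions produced above --- the \eqref{adm} conditions on $(q,r,\gamma)$, the two sets of \eqref{adm2} conditions, $q>\tilde q_1'$, and the Stein--Weiss side conditions --- and check that they reduce to \eqref{102}, \eqref{1012} and the $\gamma,q,r$ ranges of Theorem \ref{Thm2}. The non-degeneracy requirements --- in particular that the window $\max\{3s,\tfrac{\alpha+s-1}{\beta+1}\}<\gamma<\min\{1+s,\tfrac{\alpha-s}{\beta+1},\tfrac{d\beta+2\alpha-4s}{2(\beta+1)}\}$ be non-empty and compatible with the admissible range of $1/r$ --- are precisely what force the lower bounds $\alpha>\max\{\tfrac{26-3d}{12},\tfrac{12s+4ds-8s^2}{d+4s}\}$, $\beta>\max\{0,\tfrac{10s-2\alpha}{d-6s}\}$ and $s<1/3$ (for instance $3s<\tfrac{d\beta+2\alpha-4s}{2(\beta+1)}$ is equivalent to $\beta>\tfrac{10s-2\alpha}{d-6s}$). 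This last step --- a genuinely many-parameter system of linear inequalities from which one extracts the cleanest sufficient conditions --- is where I expect the bulk of the effort to lie; the only new analytic point, by contrast, is the correct placement of the (one negative) weight $b=-\tilde\gamma_2$ in the Stein--Weiss step, which is what lets $|\nabla|^{-s}$ be absorbed without recourse to fractional product or chain rules.
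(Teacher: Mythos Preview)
Your strategy is correct and matches the paper's proof: the paper proves \eqref{F1} exactly as you describe (same choice of $\tilde q_1,\tilde r_1,\tilde\gamma_1$ and H\"older chain), and for \eqref{F2} it invokes the same Stein--Weiss weighted Sobolev inequality (Lemma~\ref{le3}) with the same placement $a=\alpha-\gamma(\beta+1)$, $b=\tilde\gamma_2$, $p=r/(\beta+1)$, $q=\tilde r_2'$, then carries out the multi-parameter constraint reduction you anticipate. The only cosmetic difference is that the paper parametrizes the one free degree of freedom by $\tilde r_2$ rather than $\tilde\gamma_2$ (they are linearly related via \eqref{H4}), and then systematically eliminates it by pairing lower and upper bounds to extract \eqref{r3}, \eqref{r4}, \eqref{g4}, and finally \eqref{102}--\eqref{1012}; your description of ``choosing $\tilde\gamma_2$ near'' an endpoint is a bit informal for this step, but the underlying logic is the same.
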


\begin{rem}
We note that $\beta \leq (4-2\alpha)/(d-2s)$ if and only if $\theta_1 \ge 0 $ (or $\theta_2 \ge s/2$).
\end{rem}

\begin{proof}[Proof of Lemma \ref{le2}]
First we consider the exponent pairs $(q,r,\gamma)$ and $(\tilde{q}_i, \tilde{r}_i, \tilde{\gamma}_i)$ for $i=1,2$ satisfying $q>\tilde{q}_i'$,
\begin{equation}\label{Hs2}
\frac{2}{q}=d ( \frac{1}{2}- \frac{1}{r} )+\gamma-s,\quad
\frac{\gamma-s}{2}< \frac{1}{q} \leq \frac{1}{2},\quad
\frac{\gamma-s}{2}\leq \frac{1}{r} < \frac{1}{2},
\quad 3s < \gamma < 1+s,
\end{equation}
\begin{equation}\label{Hs3}
\frac{2}{\tilde{q}_i} = d ( \frac{1}{2}- \frac{1}{\tilde{r}_i} )  + \tilde{\gamma}_i+s,
\ \frac{\tilde{\gamma}_i+s}{2}< \frac{1}{\tilde{q}_i} \leq \frac{1}{2},
\ \frac{\tilde{\gamma}_i+s}{2}\leq  \frac{1}{\tilde{r}_i} < \frac{1}{2},
\ s < \tilde{\gamma}_i < 1-s,
\end{equation}
which are just the same given as in Proposition \ref{Prop1} with $0<s<1/2$ for which the estimates therein hold.

\subsubsection{Proof of \eqref{F1}}
The first estimate \eqref{F1} is obtained in a similar way as in the previous subsection.
Let us first set
\begin{equation}\label{set2}
\frac{1}{{\tilde{q}_1}'}=\theta_1+\frac{\beta+1}{q},\quad \frac{1}{{\tilde{r}_1}'}=\frac{\beta+1}{r} \quad \textnormal{and} \quad \tilde{\gamma}_1 -\alpha = -\gamma(\beta+1).
\end{equation}
By combining the first conditions in \eqref{Hs2} and \eqref{Hs3} for $i=1$ together with \eqref{set2},
it is then easy to see that $\theta_1$ is determined by
\begin{equation*}
\theta_1=\frac{1}{{\tilde{q}_1}'}-\frac{\beta+1}{q}=-\frac{d\beta}{4}+1-\frac{\alpha}{2}+\frac{s\beta}{2}
\end{equation*}
as in \eqref{th2}.

Next we apply H\"{o}lder's inequality repeatedly along with \eqref{set2} to get
\begin{equation*}	
\begin{aligned}
\left \||x|^{-\alpha} |u|^{\beta} v \right\|_{L_t^{\tilde{q}_1'}(I;L_x^{\tilde{r}_1'}(|x|^{\tilde{r}_1'\tilde{\gamma}_1}))}
&= \left\||x|^{\tilde{\gamma}_1-\alpha}|u|^{\beta} v  \right\|_{L_t^{\tilde{q}_1'}(I;L_x^{\tilde{r}_1'})} \cr
&= \left\||x|^{ -\gamma(\beta+1)} |u|^{\beta} v  \right\|_{L_t^{\tilde{q}_1'}(I;L_x^{\frac{r}{\beta+1}})} \cr
& \leq T^{\theta_1} \left\||x|^{ -\gamma(\beta+1)}|u|^{\beta} v  \right\|_{L_t^{\frac{q}{\beta+1}}(I;L_x^{\frac{r}{\beta+1}})} \cr
& \leq T^{\theta_1} \left\|u  \right\|_{L_t^{q}(I;L_x^{r}(|x|^{-r\gamma}))}^{\beta}  \left\| v \right\|_{L_t^{q}(I;L_x^{r}(|x|^{-r\gamma}))}
	\end{aligned}
\end{equation*}
as desired in \eqref{F1}.

Now we only need to show the requirements on $q$, $r$ and $\gamma$ for which \eqref{F1} holds.
 Using the last two conditions in \eqref{set2}, we first change the exponents $\tilde{r}_1$ and $\tilde{\gamma}_1$ in \eqref{Hs3} for $i=1$ into $r$ and $\gamma$, as follows:
\begin{gather}
\label{q1} \frac{2}{\tilde{q}_1} = -\frac{d}{2}+ \frac{d(\beta+1)}{r}+\alpha+s-\gamma(\beta+1), \\
\label{q2} \frac{\alpha-\gamma(\beta+1)+s}{2}< \frac{1}{\tilde{q}}_1 \leq \frac{1}{2},
\quad \frac{1}{2(\beta+1)}<  \frac{1}{r} \leq \frac{2-\alpha-s}{2(\beta+1)}+\frac{\gamma}{2},
\end{gather}
\begin{gather}\label{g1}
\frac{\alpha+s-1}{\beta+1}< \gamma <\frac{\alpha-s}{\beta+1}.
\end{gather}
Next we insert \eqref{q1} into the first one in \eqref{q2} to get
\begin{equation}\label{rrr}
\frac{1}{2(\beta+1)}< \frac{1}{r} \leq  \frac{1 - \alpha - s}{d(\beta+1)}+\frac{1}{2(\beta+1)}+\frac{\gamma}{d}
\end{equation}
from which the second one in \eqref{q2} can be removed using \eqref{g1}.
Finally, all the requirements deduced from $q>\tilde{q}_1'$ are already satisfied by the other requirements
similarly as before.

In summary, the requirements on $q$, $r$ and $\gamma$ for which \eqref{F1} holds are \eqref{Hs2}, \eqref{g1} and \eqref{rrr}
which are less restrictive than those in Theorem \ref{Thm2}.
But we will show that the common requirements for which both \eqref{F1} and \eqref{F2} hold are given exactly by
those in Theorem \ref{Thm2}.

\subsubsection{Proof of \eqref{F2}}
Now we have to obtain \eqref{F2} under the requirements \eqref{Hs2}, \eqref{g1} and \eqref{rrr} on $q$, $r$ and $\gamma$
for which \eqref{F1} holds.
Let us first set
\begin{equation}\label{H1}
\frac{1}{\tilde{q}_2'}=\theta_2+\frac{\beta+1}{q}
\end{equation}
and then use H\"older's inequality in $t$ to the left-hand side of \eqref{F2} to get
\begin{align*}
\left\|\, |\nabla|^{-s} (|x|^{-\alpha}|u|^{\beta}v) \right\|_{L_{t}^{\tilde{q}_2'}(I;L_x^{\tilde{r}_2'}(|x|^{\tilde{r}_2'\tilde{\gamma}_2}))}
&=\left\|\,|x|^{\tilde{\gamma}_2} |\nabla|^{-s} (|x|^{-\alpha}|u|^{\beta}v) \right\|_{L_{t}^{\tilde{q}_2'}(I;L_x^{\tilde{r}_2'})} \cr
&\leq T^{\theta_2} \left\|\,|x|^{\tilde{\gamma}_2} |\nabla|^{-s} (|x|^{-\alpha}|u|^{\beta}v)\right\|_{L_{t}^{\frac{q}{\beta+1}}(I;L_x^{{\tilde{r}_2}'})}.
\end{align*}
To handle the term $|\nabla|^{-s}$ here, we make use of the following lemma which is a weighted version of the Sobolev embedding.

\begin{lem}[\cite{SW}] \label{le3}
Let $d\ge 1$ and $0<s<d$. If
\begin{equation*}
1<p\leq q<\infty,\quad -d/q<b\leq a<d/p' \quad\text{and}\quad a-b-s=d/q-d/p,
\end{equation*}
then
\begin{equation}\label{soso}
\||x|^{b}f\|_{L^q} \leq C_{a, b, p, q} \||x|^{a} |\nabla|^s f\|_{L^p}.
\end{equation}
\end{lem}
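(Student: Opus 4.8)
The plan is to recognize \eqref{soso} as an instance of the classical Stein--Weiss inequality for the Riesz potential, which is exactly the content of the cited reference \cite{SW}; thus the proof consists of a reduction to that inequality together with a routine matching of exponents. (Note that when $a=b=0$ the estimate \eqref{soso} is just the Sobolev embedding $\||\nabla|^{-s}g\|_{L^q}\lesssim\|g\|_{L^p}$ with $\tfrac1q=\tfrac1p-\tfrac sd$, so \eqref{soso} is genuinely a weighted refinement of it.)

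First I would set $g=|\nabla|^s f$, so that $f=|\nabla|^{-s}g$, and use that for $0<s<d$ the operator $|\nabla|^{-s}$ is the Riesz potential:
\begin{equation*}
f(x)=|\nabla|^{-s}g(x)=c_{d,s}\int_{\mathbb{R}^d}\frac{g(y)}{|x-y|^{d-s}}\,dy,
\end{equation*}
valid for $g$ in the Schwartz class and then extended by density once the a priori bound is in hand. Then \eqref{soso} becomes
\begin{equation*}
\Big\||x|^{b}\int_{\mathbb{R}^d}\frac{g(y)}{|x-y|^{d-s}}\,dy\Big\|_{L^q}\lesssim\big\||x|^{a}g\big\|_{L^p},
\end{equation*}
which is precisely the Stein--Weiss inequality with kernel order $d-s$ and weight exponents $-b$ on the target side and $a$ on the source side. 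In this normalization the Stein--Weiss hypotheses are: $1<p\le q<\infty$; the strict bounds $-b<d/q$ and $a<d/p'$; the non-negativity condition $a+(-b)\ge0$, i.e.\ $b\le a$; and the dimensional balance $\tfrac1q=\tfrac1p+\tfrac{a-b-s}{d}$, i.e.\ $a-b-s=d/q-d/p$. These coincide line by line with the assumptions of the lemma, so \eqref{soso} follows at once.

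If one wants a self-contained argument instead of invoking \cite{SW}, I would establish the displayed inequality by the standard annular decomposition of $\mathbb{R}^d_y$ into the regions $\{|y|\le|x|/2\}$, $\{|y|\ge2|x|\}$ and $\{|x|/2\le|y|\le2|x|\}$. On the first region $|x-y|\sim|x|$, so pulling $|x|^{-(d-s)}$ out reduces the contribution to a weighted Hardy inequality in $x$; on the second, $|x-y|\sim|y|$ gives the dual weighted Hardy inequality; in both cases the strict conditions $-d/q<b$ and $a<d/p'$ (with $b\le a$) are exactly what make these estimates hold. On the near-diagonal region the weights $|x|^{b}$ and $|x|^{a}$ are comparable on each dyadic shell $|x|\sim2^{j}$, so after a dyadic decomposition one is reduced on each shell to the unweighted Hardy--Littlewood--Sobolev inequality, the scaling identity $a-b-s=d/q-d/p$ being used to line up the powers of $2^{j}$, and then one sums over $j$.

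The main obstacle is precisely this near-diagonal region: one must combine Hardy--Littlewood--Sobolev on each dyadic shell with a summation in the shell index, and it is exactly here that the strict inequalities $-d/q<b\le a<d/p'$ together with the scaling balance are needed to obtain a convergent (geometric, or in the edge case $p=q$ a more delicate vector-valued) sum; away from the diagonal the estimates are elementary one-dimensional weighted Hardy inequalities. Since all of this is classical and fully contained in \cite{SW}, in the paper it suffices to perform the reduction above and quote the cited result.
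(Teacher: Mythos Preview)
Your proposal is correct and matches the paper's approach: the paper does not prove this lemma at all but simply states it with the citation \cite{SW}, so your reduction to the Stein--Weiss inequality for the Riesz potential is exactly the intended argument. Your additional self-contained sketch via dyadic decomposition goes beyond what the paper provides, but it is accurate and would serve well if one wanted to avoid the black-box citation.
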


Indeed, applying \eqref{soso} with $a=\alpha-\gamma(\beta+1)$, $b=\tilde{\gamma}_2$, $p=\frac{r}{\beta+1}$, $q=\tilde{r}_2'$, and $f=|x|^{-\alpha}|u|^{\beta}v$,
and then using H\"older's inequality, we get for $0<s<d$
\begin{align*}
 \left\|\,|x|^{\tilde{\gamma}_2} |\nabla|^{-s} (|x|^{-\alpha}|u|^{\beta}v) \right\|_{L_{t}^{\frac{q}{\beta+1}}(I;L_x^{\tilde{r}_2'})}
&\lesssim  \left\|  |x|^{-\gamma(\beta+1)}|u|^{\beta}v \right\|_{L_{t}^{\frac{q}{\beta+1}}(I;L_x^{\frac{r}{\beta+1}})} \cr
&\lesssim  \left\|  |x|^{-\gamma} u\right\|_{L_{t}^{q}(I;L_x^{r})} ^{\beta} \left\|  |x|^{-\gamma} v\right\|_{L_{t}^{q}(I;L_x^{r})}
\end{align*}
if
\begin{equation}\label{H3}
 1< \frac{r}{\beta+1} \leq \tilde{r}_2' < \infty,
\end{equation}
\begin{equation}\label{H5}
-\frac{d}{\tilde{r}_2'}<\tilde{\gamma}_2\leq\alpha-\gamma(\beta+1)<d-\frac{d(\beta+1)}{r},
\end{equation}
and
\begin{equation}\label{H4}
 \tilde{\gamma}_2= \alpha-\gamma(\beta+1) -s -\frac{d}{\tilde{r}_2'}+\frac{d(\beta+1)}{r}.
\end{equation}
Since $\tilde{\gamma}_2>0$, the first inequality in \eqref{H5} is redundant,
and since the upper bound of $1/r$ in \eqref{rrr} is less than the one which follows from the third inequality in \eqref{H5},
it is also redundant. Hence \eqref{H5} is reduced to
\begin{equation}\label{H55}
\tilde{\gamma}_2\leq\alpha-\gamma(\beta+1).
\end{equation}

Now we replace $\tilde{\gamma}_2$ in the first condition of \eqref{Hs3} for $i=2$ with \eqref{H4} to get
\begin{equation}\label{q2tilde}
\frac{2}{\tilde{q}_2}= -\frac{d}{2}+\alpha-\gamma(\beta+1)+\frac{d(\beta+1)}{r}.
\end{equation}
Then by inserting \eqref{q2tilde} and the first one of \eqref{Hs2} into \eqref{H1}, we see that
$$\theta_2 =\frac{1}{\tilde{q}_2'}-\frac{\beta+1}{q}=-\frac{d\beta}{4}+1-\frac{\alpha}{2}+\frac{s(\beta+1)}{2},$$
as in \eqref{th2}.

By using \eqref{H4} and \eqref{q2tilde}, the exponents $\tilde{q}_2$ and $\tilde{\gamma}_2$ in all the inequalities in \eqref{Hs3} for $i=2$
can be removed as follows:
\begin{equation}\label{H11}
\frac{1}{r}\leq \frac{1-\alpha}{d(\beta+1)}+\frac{1}{2(\beta+1)}+\frac{\gamma}{d},\quad \frac{1}{\tilde{r}_2} < \frac12,
\end{equation}
\begin{equation}\label{H12}
\alpha-\gamma(\beta+1) -\frac{d}{\tilde{r}_2'}+\frac{d(\beta+1)}{r} \leq \frac{2}{\tilde{r}_2}<1,
\end{equation}
\begin{equation}\label{H22}
2s < \alpha-\gamma(\beta+1) -\frac{d}{\tilde{r}_2'}+\frac{d(\beta+1)}{r} < 1.
\end{equation}
Here, \eqref{H11} and \eqref{H12} come from the second and third ones in \eqref{Hs3}, respectively,
while \eqref{H22} is derived from the last one in \eqref{Hs3}.
Similarly, \eqref{H3} and \eqref{H55} are transformed to
\begin{equation}\label{H33}
0\leq \frac{\beta+1}{r}-\frac{1}{\tilde{r}_2'} <\frac{1}{\tilde{r}_2}\quad\text{and}\quad \frac{d(\beta+1)}{r}-\frac{d}{\tilde{r}_2'} \leq s,
\end{equation}
respectively.
It is immediate that the first inequality in \eqref{H11} is redundant by \eqref{rrr}.
Since $\tilde{r}_2 > 2$, \eqref{H12} and \eqref{H22} are reduced to
\begin{equation}\label{rt0}
2s < \alpha-\gamma(\beta+1) -\frac{d}{\tilde{r}_2'}+\frac{d(\beta+1)}{r} \leq \frac{2}{\tilde{r}_2}
\end{equation}
which is possible for $s<1/\tilde{r}_2$.
If $s<1/\tilde{r}_2$, \eqref{H33} is then reduced to
\begin{equation}\label{rt1}
0 \leq \frac{\beta+1}{r}-\frac{1}{{\tilde{r}_2'}} \leq \frac{s}{d}.
\end{equation}
In summary, we are reduced to \eqref{rt0} and  \eqref{rt1}, which are equivalent to
\begin{equation}\label{rt2}
\frac{2s-\alpha+\gamma(\beta+1)}{d}-\frac{\beta+1}{r}+1 < \frac{1}{\tilde{r}_2} \leq \frac{d-\alpha+\gamma(\beta+1)}{d-2}-\frac{d(\beta+1)}{(d-2)r}
\end{equation}
and
\begin{equation}\label{rt3}
1-\frac{\beta+1}{r}\leq \frac{1}{\tilde{r}_2} \leq \frac{s}{d} + 1 -\frac{\beta+1}{r}
\end{equation}
respectively, under the condition
\begin{equation}\label{rt45}
s<\frac1{\tilde{r}_2}<\frac12.
\end{equation}

Now, it suffices to check that there exists $\tilde{r}_2$ satisfying \eqref{rt2}, \eqref{rt3} and \eqref{rt45} under the conditions \eqref{Hs2}, \eqref{g1} and \eqref{rrr}.
To do so, we first make each lower bound of $1/\tilde{r}_2$ in \eqref{rt2}, \eqref{rt3} and \eqref{rt45} less than all the upper bounds in turn.
Then we are reduced to
\begin{equation}\label{r22}
\frac{1}{r}< \frac{1-s}{\beta+1} + \frac{2s-\alpha}{d(\beta+1)} + \frac{\gamma}{d}
\end{equation}
and
\begin{equation}\label{rrr2}
\frac{1}{2(\beta+1)}+\frac{2s-\alpha}{d(\beta+1)}+\frac{\gamma}{d} <\frac{1}{r}.
\end{equation}
Indeed, starting the process from the lower bound in \eqref{rt2}, we arrive at \eqref{r22}, $s-\alpha+\gamma(\beta+1)<0$, and \eqref{rrr2},
but the second one is redundant by \eqref{g1}.
Similarly from the lower bound in \eqref{rt3}, we arrive at $\frac1r\leq\frac{2-\alpha}{2(\beta+1)}+\frac\gamma2$, $0\leq \frac sd$, and
$\frac{1}{2(\beta+1)}<\frac1r$. But here the first and third ones are replaced by \eqref{rrr} under \eqref{g1},
and the second one is redundant.
Finally from the lower bound in \eqref{rt45}, we arrive at \eqref{r22}, $\frac{1}{r}< \frac{1-s}{\beta+1}+\frac{s}{d(\beta+1)}$, and
$s<1/2$, but the second one is replaced by \eqref{r22} using \eqref{g1} and the last one is redundant.
Nextly, we note that \eqref{r22} may be also eliminated by \eqref{rrr} using the fact that $d-2-2sd+6s =(d-2)(1-2s)+2s>0$,
and we combine the first two conditions in \eqref{Hs2} to conclude
\begin{equation}\label{rt4}
\frac{1}{2}-\frac{1-\gamma+s}{d} \leq \frac{1}{r} < \frac{1}{2},
\end{equation}
which replaces the third condition in \eqref{Hs2} since $\gamma-s<1$.
Finally, all the requirements deduced from $q>\tilde{q}_2'$ are already satisfied by the other requirements
similarly as before.

Hence, all the requirements on $r$ are \eqref{rrr}, \eqref{rt4} and \eqref{rrr2} for which both \eqref{F1} and \eqref{F2} hold.
In summary,
\begin{equation}\label{r3}
\max \lbrace \frac{1}{2(\beta+1)},\ \frac{d-2s-2}{2d}+\frac{\gamma}{d} \rbrace <  \frac{1}{r}
 < \min \lbrace   \frac{d-2s-2(\alpha-1)}{2d(\beta+1)} + \frac{\gamma}{d}, \frac{1}{2} \rbrace
\end{equation}
and
\begin{equation}\label{r4}
\frac{1}{2(\beta+1)} + \frac{2s-\alpha}{d(\beta+1)} + \frac{\gamma}{d} < \frac{1}{r},
\end{equation}
which are the same as in Theorem \ref{Thm2}.
Here, omitting the equality in \eqref{rt4} is harmless.

Next we check that there exist $r$ satisfying \eqref{r3} and \eqref{r4}.
For this, we make each lower bound of $1/r$ in them less than all the upper bounds in \eqref{r3} in turn.
Then the restrictions on $\gamma$ and $s$ are deduced as
\begin{equation}\label{g2}
\quad \gamma<\frac{d\beta+2\alpha-4s}{2(\beta+1)}
\end{equation}
and $0<s<1/3$.
In fact, staring the process from the first lower bound in \eqref{r3}, we arrive at $\alpha + s -1 -\gamma(\beta+1) < 0$ and $\beta > 0$,
but the former is redundant by \eqref{g1} as well as $\beta>0$.
Similarly from the second lower bound in \eqref{r3}, we arrive at $\beta < \frac{4-2\alpha}{d-2-2s}$ and $\gamma < 1+s$
which are obviously redundant.
Finally from the lower bound in \eqref{r4}, we arrive at $s<1/3$ and \eqref{g2}.

Hence all the requirements on $\gamma$ for which both \eqref{F1} and \eqref{F2} hold are $3s< \gamma < 1+s$, \eqref{g1} and \eqref{g2}.
In summary,
\begin{equation}\label{g4}
\max \{3s,\, \frac{\alpha+s-1}{\beta+1}\} < \gamma < \min \{1+s,\, \frac{\alpha-s}{\beta+1},\ \frac{d\beta+2\alpha-4s}{2(\beta+1)} \}
\end{equation}
as in Theorem \ref{Thm2}.

To guarantee $\gamma$ satisfying all the requirements in \eqref{g4} under $0<s<1/3$,
we make each lower bound of $\gamma$ in \eqref{g4} less than all the upper bounds in turn.
Then we are reduced to
\begin{equation}\label{beta1}
\beta < \frac{\alpha-4s}{3s} \quad \textnormal{and} \quad \frac{10s-2\alpha}{d-6s} < \beta.
\end{equation}
Indeed, starting from the first lower bound, we see $s<1/2$ and \eqref{beta1}.
Nextly, from the second lower bound, we have $\alpha-2 < \beta + s\beta$, $s<1/2$ and $6s-2 < d\beta$
which are clearly redundant.

Now the first condition in \eqref{beta1} is satisfied if $\frac{4-2\alpha}{d-2s} < \frac{\alpha-4s}{3s}$,
which is equivalent to
\begin{equation*}
 \frac{12s+4ds-8s^2}{d+4s} < \alpha
\end{equation*}
as in Theorem \ref{Thm2}.
Then we need to check that there exist $\alpha \in (\frac{12s+4ds-8s^2}{d+4s}, 2)$.
This is possible if
\begin{equation}\label{funcs}
-4s^2+2s+2ds<d
\end{equation}
holds for $0<s<1/3$.
To show this, we consider a quadratic function $f_1(s) = -4s^2 +2(1+d)s$ which is concave downward.
Since $f_1(s)$ is an increasing function for $s \in (0,\frac{d+1}{4})$ including $s \in (0, \frac{1}{3})$,
the inequality \eqref{funcs} follows clearly from the fact that $f_1(\frac{1}{3})<d$.
Next we check that there exist $\beta$ satisfying the second condition of \eqref{beta1} under $0<s<1/3$.
For this, we need to show
$ \frac{10s-2\alpha}{d-6s} < \frac{4-2\alpha}{d-2s} $
which is equivalent to
\begin{equation}\label{mcv}
-10s^2+(5d+12-4\alpha)s < 2d.
\end{equation}
Similarly, we consider a quadratic function $f_2(s)=-10s^2+(5d+12-4\alpha)s$.
It is also concave downward and increases for $s < \frac{5d+12-4\alpha}{20}$.
Since $\frac{1}{3} < \frac{5d+12-4\alpha}{20}$,
we require $f_2(\frac{1}{3}) < 2d$ to show \eqref{mcv}, which is equivalent to
$$\frac{26-3d}{12} < \alpha$$
as in Theorem \ref{Thm2}.
Hence, $\alpha$ and $\beta$ are required as in \eqref{102} and \eqref{1012}.
Since $\frac{d}{2} - \frac{2-\alpha}{\beta}\leq s$ (i.e., $\beta \leq \frac{4-2\alpha}{d-2s}$),
we finally require
\begin{equation*}
\frac{d}{2} - \frac{2-\alpha}{\beta} <\frac13
\end{equation*}
which is equivalent to $\beta < \frac{12-6\alpha}{3d-2}$.
But here, $\frac{4-2\alpha}{d-2s} < \frac{12-6\alpha}{3d-2}$ for $0<s<1/3$.
No more requirement occurs.
This completes the proof.
\end{proof}

\section{The well-posedness in $L^2$}\label{sec4}
In this section we prove Theorem \ref{Thm1} and Corollary \ref{cor} by applying the contraction mapping principle
along with the weighted Strichartz estimates in Proposition \ref{Prop1} with $s=0$.
The nonlinear estimates in Lemma \ref{le1} play a key role in this step.

\subsection{The subcritical case}

By Duhamel's principle, we first write the solution of the Cauchy problem \eqref{INLS} as
\begin{equation}\label{Duhamel}
\Phi_{u_0}(u) = e^{it \Delta} u_0 - i\lambda  \int_{0}^{t} e^{i(t-\tau)\Delta}F(u) d\tau
\end{equation}
where $F(u)=|\cdot|^{-\alpha} |u(\cdot,\tau)|^{\beta}u(\cdot, \tau)$.
For appropriate values of $T,M>0$, we shall show that $\Phi$ defines a contraction map on
\begin{align*}\label{contset}
X(T,M)=\Big\lbrace u\in  &C_t(I;L_x^2)\cap L_t^q(I;L_x^r(|x|^{-r\gamma})) :\\
&\sup_{t\in I}\|u\|_{L_x^2}+\|u\|_{L_t^q(I;L_x^r(|x|^{-r\gamma}))} \leq  M \Big\rbrace,
\end{align*}
equipped with the distance
$$d(u,v)=\sup_{t\in I}\|u-v\|_{L_x^2}+\|u-v\|_{L_t^q(I;L_x^r(|x|^{-r\gamma}))}.$$
Here, $I=[0,T]$ and $(q,r,\gamma)$ is given as in Theorem \ref{Thm1}.

For this, we first show that $\Phi$ is well defined on $X$. Namely, for $u\in X$
\begin{equation}\label{fgj}
\sup_{t\in I}\|\Phi(u)\|_{L_x^2}+\|\Phi(u)\|_{L_t^q(I;L_x^r(|x|^{-r\gamma}))} \leq M.
\end{equation}
Using \eqref{T} and \eqref{TT*} combined with \eqref{Duhamel}, we obtain
\begin{equation*}
  \| \Phi(u)\|_{L_t^{q}(I ; L_{x}^{r}(|x|^{-r \gamma}))}
  \leq C\| u_0 \|_{L^2} + C\left\||x|^{-\alpha}|u|^{\beta}u \right\|_{L_{t}^{\tilde{q}'} (I; L_{x}^{\tilde{r}'}(|x|^{\tilde{r}' \tilde{\gamma}}))}.
\end{equation*}
By applying Lemma \ref{le1}, it follows then that
\begin{align}
\nonumber\left\| \Phi(u) \right\|_{L_{t}^{q}(I; L_{x}^{r}(|x|^{-r\gamma}))}
&\leq C\| u_0 \|_{L^2} +  CT^{\theta_0} \| u \|_{L_t^{q}(I; L_{x}^{r} (|x|^{-r\gamma}))} ^{\beta+1} \\
\label{1j}&\leq C\| u_0 \|_{L^2} +  CT^{\theta_0} M^{\beta+1}.
\end{align}
On the other hand, applying Plancherel's theorem, \eqref{T*} and \eqref{Festi} in turn, we see
\begin{align}\label{conv}
\nonumber\sup_{t\in I}\|\Phi(u)\|_{L_x^2} &\leq C\|u_0\|_{L^2} +C\bigg\|\int_{-\infty}^{\infty} e^{-i\tau\Delta}\chi_{[0,t]}(\tau)F(u) d\tau\bigg\|_{L_x^2}\\
\nonumber&\leq C\|u_0\|_{L^2}+C\|F(u)\|_{L_t^{\tilde{q}'}(I; L_{x}^{\tilde{r}'} (|x|^{\tilde{r}' \widetilde{\gamma}}))}\\
&\leq C\|u_0\|_{L^2}+CT^{\theta_0} M^{\beta+1}.
\end{align}
Hence, if we fix $M=4C\| u_0 \|_{L^2}$ and take $T>0$ such that
\begin{equation}\label{dgh0}
 CT^{\theta_0}M^{\beta}\leq\frac 18,
\end{equation}
we get \eqref{fgj}.
In the subcritical case where $\beta<(4-2\alpha)/d$, $T$ carries a positive power $\theta_0$ here.
Thus one can give a precise estimate for the life span of the solution according to the size of the initial data, $T\sim\|u_0\|_{L^2}^{-\beta/\theta_0}$.

Next we show that $\Phi$ is a contraction.
Namely, for $u,v\in X$
 \begin{equation}\label{alk}
 d(\Phi(u),\Phi(v)) \leq \frac{1}{2} d(u,v ).
 \end{equation}
Using the same arguments used in \eqref{conv}, we see
\begin{equation}\label{cont24}
\sup_{t\in I}\|\Phi(u)-\Phi(v)\|_{L_x^2}
\leq C\|F(u)-F(v)\|_{L_t^{\tilde{q}'}(I; L_{x}^{\tilde{r}'} (|x|^{\tilde{r}' \widetilde{\gamma}}))}.
\end{equation}
Then we will show
\begin{equation}\label{cont3}
 \| F(u)-F(v) \|_{L_t^{\tilde{q}'}(I; L_{x}^{\tilde{r}'} (|x|^{\tilde{r}' \tilde{\gamma}}))} \leq \frac{1}{4C} \| u-v \|_{L_t^{q}(I; L_{x}^{r}(|x|^{-r \gamma}))}.
 \end{equation}
Indeed, using the following simple inequality
\begin{equation}\label{sin}
  \left||u|^{\beta}u-|v|^{\beta}v \right| \leq C \big(|u|^{\beta}|u-v|+|v|^{\beta}|u-v|\big),
\end{equation}
we are reduced to showing
 \begin{equation}\label{cont}
 \left\| |x|^{-\alpha} |u|^{\beta} |u-v| \right\|_{L_t^{\tilde{q}'}(I; L_{x}^{\tilde{r}'} (|x|^{\tilde{r}' \tilde{\gamma}}))} \leq \frac{1}{8C} \left\| u-v \right\|_{L_t^{q}(I; L_{x}^{r}(|x|^{-r \gamma}))}
 \end{equation}
by symmetry.
For this we apply Lemma \ref{le1} with $v$ replaced by $|u-v|$ so that
\begin{align*}
\left\| |x|^{-\alpha}  |u|^{\beta}|u-v| \right\|_{L_t^{\tilde{q}'}(I; L_{x}^{\tilde{r}'} (|x|^{\tilde{r}' \tilde{\gamma}}))}
& \leq T^{\theta_0} \| u \|_{L_t^{q}(I; L_{x}^{r}(|x|^{-r \gamma}))}^{\beta}  \| u-v \|_{L_t^{q}(I; L_{x}^{r} (|x|^{-r \gamma}))} \cr
&  \leq T^{\theta_0} M^{\beta}  \| u-v \|_{L_t^{q}(I; L_{x}^{r}(|x|^{-r \gamma}))},
\end{align*}
which implies \eqref{cont} because we have chosen $T,M>0$ in \eqref{dgh0} so that $T^{\theta_0}M^{\beta}\leq1/(8C)$.
On the other hand,
\begin{equation*}
\|\Phi(u)-\Phi(v)\|_{L_{t}^{q}(I; L_{x}^{r}(|x|^{-r \gamma}))}
\leq C\|F(u)-F(v)\|_{L_t^{\tilde{q}'}(I; L_{x}^{\tilde{r}'} (|x|^{\tilde{r}' \tilde{\gamma}}))}.
	\end{equation*}
Now we obtain \eqref{alk} combining this, \eqref{cont24} and \eqref{cont3}.

Therefore, we have proved that there exists a unique local solution
$$u \in C(I ; L^{2}) \cap L^{q}(I ; L^{r}(|x|^{-r \gamma}))$$
with $T\sim\|u_0\|_{L^2}^{-\beta/\theta_0}$.
The continuous dependence of the solution $u$ with respect to the initial data $u_0$ follows clearly in the same way:
\begin{align*}
d(u,v)&\lesssim d\big(e^{it \Delta}u_0, e^{it\Delta} v_0\big) +d\bigg(\int_{0}^{t} e^{i(t-\tau)\Delta}F(u)d\tau, \int_{0}^{t} e^{i(t-\tau)\Delta}F(v) d\tau\bigg)\\
&\lesssim\|u_0-v_0\|_{L^2}.
\end{align*}
Here, $u,v$ are the corresponding solutions for initial data $u_0,v_0$, respectively.
Thanks to the mass conservation \eqref{mass},
the above process can be also iterated on translated time intervals, preserving the length of the time interval comparable to $\|u_0\|_{L^2}^{-\beta/\theta_0}$ to extend the above local solution globally in time.
The proof is now complete.

\subsection{The critical case}
The critical case requires slightly different arguments
and it yields different conclusions.
This is due to the fact that the power $\theta_0$ in the above argument becomes zero in this case.
This time we cannot gain a small power of $T$ and the smallness must have a different source.
For this reason, the global result will follow from the smallness of the initial data.
We give the main lines of the proof.

We start from showing that $\Phi$ defines a contraction on
\begin{align*}\label{contset}
\widetilde{X}(T,M,N)=\Big\lbrace u\in  &C_t(I;L_x^2)\cap L_t^q(I;L_x^r(|x|^{-r\gamma})) :\\
&\sup_{t\in I}\|u\|_{L_x^2}\leq N,\quad \|u\|_{L_t^q(I;L_x^r(|x|^{-r\gamma}))} \leq M \Big\rbrace
\end{align*}
equipped with the distance
$$d(u,v)=\sup_{t\in I}\|u-v\|_{L_x^2}+\|u-v\|_{L_t^q(I;L_x^r(|x|^{-r\gamma}))}.$$
First, we see as in \eqref{conv} and \eqref{1j} that
\begin{equation*}
\sup_{t\in I}\|\Phi(u)\|_{L_x^2}\leq C\|u_0\|_{L^2}+C M^{\beta+1}
\end{equation*}
and
$$\|\Phi(u)\|_{L_t^q(I;L_x^r(|x|^{-r\gamma}))}\leq \|e^{it\Delta}u_0\|_{L_t^q(I;L_x^r(|x|^{-r\gamma}))}+C M^{\beta+1},$$
respectively.
Observe here that
$$\|e^{it\Delta}u_0\|_{L_t^q(I;L_x^r(|x|^{-r\gamma}))}\leq \varepsilon$$
for some $\varepsilon>0$ small enough which will be chosen later, provided that either $\|u_0\|_{L^2}$ is small (see \eqref{T} with $s=0$)
or it is satisfied for some $T>0$ small enough by the dominated convergence theorem.
Hence, one can take $T=\infty$ in the first case and $T$ to be this small time in the second.
We therefore get $\Phi(u)\in \widetilde{X}$ for $u\in\widetilde{X}$ if
\begin{equation}\label{az}
C\|u_0\|_{L^2}+C M^{\beta+1}\leq N\quad\text{and}\quad \varepsilon+C M^{\beta+1}\leq M.
\end{equation}
On the other hand, using the same argument employed to show \eqref{alk}, we see
\begin{align*}
d(\Phi(u),\Phi(v))
&=\sup_{t\in I}\|\Phi(u)-\Phi(v)\|_{L_x^2}+\|\Phi(u)-\Phi(v)\|_{L_t^q(I;L_x^r(|x|^{-r\gamma}))}\\
&\leq CM^\beta\| u-v \|_{L_t^{q}(I; L_{x}^{r}(|x|^{-r \gamma}))}\\
&\leq  CM^\beta d(u,v).
\end{align*}
Now by setting $N=2C\|u_0\|_{L^2}$ and $M=2\varepsilon$ and then choosing $\varepsilon>0$ small enough so that
\eqref{az} holds and $CM^\beta \leq1/2$,
it follows that $\widetilde{X}$ is stable by $\Phi$ and $\Phi$ is a contraction on $\widetilde{X}$.

Therefore, there exists a unique local solution $u \in C(I ; L^{2}) \cap L^{q}(I ; L^{r}(|x|^{-r \gamma}))$
in the time interval $[0,T]$ with a small $T$.
Recall from the above argument that when $\|u_0\|_{L^2}$ is small enough, we can take $T=\infty$ to obtain a global solution.
The continuous dependence on the initial data $u_0$ follows clearly in the same way as before.
It remains to prove the scattering property. Following the argument above, one can easily see that
\begin{align*}
\big\|e^{-it_2\Delta}u(t_2)-e^{-it_1\Delta}u(t_1)\big\|_{L_x^2}&=\bigg\|\int_{t_1}^{t_2}e^{-i\tau\Delta}F(u)d\tau\bigg\|_{L_x^2}\\
&\lesssim\|F(u)\|_{L_t^{\tilde{q}'}([t_1,t_2]; L_{x}^{\tilde{r}'} (|x|^{\tilde{r}' \tilde{\gamma}}))}\\
&\lesssim\| u \|_{L_t^{q}([t_1,t_2]; L_{x}^{r} (|x|^{-r\gamma}))} ^{\beta+1} \quad\rightarrow\quad0
\end{align*}
as $t_1,t_2\rightarrow\infty$. This implies that
$$\varphi:=\lim_{t\rightarrow\infty}e^{-it\Delta}u(t)$$
exists in $L^2$. Furthermore, one has
$$u(t)-e^{it\Delta}\varphi= i\lambda  \int_{t}^{\infty} e^{i(t-\tau)\Delta}F(u) d\tau,$$
and hence
\begin{align*}
\big\|u(t)-e^{it\Delta}\varphi\big\|_{L_x^2}=\bigg\|\int_{t}^{\infty} e^{i(t-\tau)\Delta}F(u) d\tau\bigg\|_{L_x^2}
&\lesssim\|F(u)\|_{L_t^{\tilde{q}'}([t,\infty); L_{x}^{\tilde{r}'} (|x|^{\tilde{r}' \tilde{\gamma}}))}\\
&\lesssim\| u \|_{L_t^{q}([t,\infty); L_{x}^{r} (|x|^{-r\gamma}))} ^{\beta+1} \quad\rightarrow\quad0
\end{align*}
as $t\rightarrow\infty$.
This completes the proof.

\section{The well-posedness in $H^s$}\label{sec5}
This final section is devoted to the proof of Theorem \ref{Thm2}.
The proof based on Proposition \ref{Prop1} and Lemma \ref{le2}
is similar to the one given in the previous section for the $L^2$ case; therefore, we shall give only a sketch of it.

\subsection{The subcritical case}
For appropriate values of $T,M>0$, we show that $\Phi$ defines a contraction map on
\begin{align*}
X(T,M)=\Big\lbrace u\in  &C_t(I;H_x^s)\cap L_t^q(I;L_x^r(|x|^{-r\gamma})) :\\
&\sup_{t\in I}\|u\|_{H_x^s}+\|u\|_{L_t^q(I;L_x^r(|x|^{-r\gamma}))} \leq  M \Big\rbrace
\end{align*}
equipped with the distance
$$d(u,v)=\sup_{t\in I}\|u-v\|_{H_x^s}+\|u-v\|_{L_t^q(I;L_x^r(|x|^{-r\gamma}))}.$$
Here, $I=[0,T]$ and the exponent pair $(q,r,\gamma)$ is given as in Theorem \ref{Thm2}.
Firstly, $\Phi(u) \in X$ for $u \in X$, i.e.,
\begin{equation}\label{fgjs}
\sup_{t\in I}\|\Phi(u)\|_{H_x^s}+\|\Phi(u)\|_{L_t^q(I;L_x^r(|x|^{-r\gamma}))} \leq M.
\end{equation}
Indeed, using \eqref{TT*} with $(\tilde{q}, \tilde{r}, \tilde{\gamma}) = (\tilde{q}_1, \tilde{r}_1, \tilde{\gamma}_1)$
followed by \eqref{F1} in Lemma \ref{le2} and \eqref{T}, we see that
\begin{align}
\nonumber  \| \Phi(u)\|_{L_t^{q}(I ; L_{x}^{r}(|x|^{-r \gamma}))}
  &\leq C\|u_0\|_{H^s} + C\left\||x|^{-\alpha}|u|^{\beta}u \right\|_{L_{t}^{\tilde{q}_1'} (I; L_{x}^{\tilde{r}_1'}(|x|^{\tilde{r}_1' \tilde{\gamma}_1}))}\\
\nonumber&\leq C\| u_0 \|_{{H}^s} +  CT^{\theta_1} \left\| u \right\|_{L_t^{q}(I; L_{x}^{r} (|x|^{-r\gamma}))} ^{\beta+1} \\
\label{1js}&\leq C\| u_0 \|_{{H}^s} +  CT^{\theta_1} M^{\beta+1}
\end{align}
because $\| f \|_{\dot{H}^s}\lesssim\| f \|_{H^s}$.
On the other hand, from Plancherel's Theorem combined with the smoothing estimate \eqref{T*}, we see
\begin{align*}
\sup_{t\in I}\|u\|_{H_x^s} &\leq C\|u_0\|_{H^s}  + C\sup_{t\in I} \left\| \int_{0}^t e^{-i\tau\Delta} F(u)\ d\tau \right\|_{H^s}\\
&\leq C\|u_0\|_{H^s}  +C\big(\| F(u)\|_{L_t^{\tilde{q}_1'}(I; L_x^{\tilde{r}_1'}(|x|^{\tilde{r}_1'\tilde{\gamma}_1}))}+\big\| |\nabla|^{-s} F(u) \big\|_{L_{t}^{{\tilde{q}_2}'}(I;L_x^{{\tilde{r}_2}'}(|x|^{{\tilde{r}_2}' \tilde{\gamma}_2}))}\big).
\end{align*}
Here we also used, for the second inequality, that $\| f \|_{H^s}\lesssim\| f \|_{\dot{H}^s}+\| f \|_{L^2}$.
By using the nonlinear estimates \eqref{F1} and \eqref{F2}, it follows then that
\begin{align}\label{cr}
\nonumber\sup_{t\in I}\|u\|_{H_x^s} &\leq  C\| u_0\|_{H^s} +C (T^{\theta_1}+T^{\theta_2}) \|u\|_{L_t^q(I; L_x^r(|x|^{-r\gamma}))}^{\beta+1}\\
 &\leq  C\| u_0\|_{H^s} +C (T^{\theta_1}+T^{\theta_2}) M^{\beta+1}.
\end{align}
Thus, if we set $M=4C\| u_0 \|_{H^s}$ and take $T>0$ such that
\begin{equation}\label{dgh1}
 C(T^{\theta_1}+T^{\theta_2})M^{\beta}\leq\frac 18,
\end{equation}
we obtain \eqref{fgjs}.
Nextly, $\Phi$ is a contraction on $X$, i.e., for $u,v\in X$
 \begin{equation}\label{alks}
 d(\Phi(u),\Phi(v)) \leq \frac{1}{2} d(u,v).
 \end{equation}
Indeed, we first see as above that
\begin{align*}
\sup_{t\in I}\|\Phi(u)-\Phi(v)\|_{H_x^s}
\leq\ &  C \left\| F(u)-F(v) \right\|_{L_t^{\tilde{q}_1'}(I; L_x^{\tilde{r}_1'}(|x|^{\tilde{r}_1'\tilde{\gamma}_1}))} \cr
& +  C\left\| |\nabla|^{-s} (F(u)-F(v)) \right\|_{L_{t}^{{\tilde{q}_2}'}(I;L_x^{{\tilde{r}_2}'}(|x|^{{\tilde{r}_2}' \tilde{\gamma}_2}))}.
\end{align*}
Using the simple inequality \eqref{sin}, the right-hand side here is bounded as in Lemma \ref{le2} by
$$2C(T^{\theta_1}+T^{\theta_2}) M^{\beta}\left\| u-v \right\|_{L_t^{q}(I; L_{x}^{r}(|x|^{-r \gamma}))}.$$
Meanwhile, by \eqref{TT*} we have
\begin{equation*}
\|\Phi(u)-\Phi(v)\|_{L_{t}^{q}(I; L_{x}^{r}(|x|^{-r \gamma}))}
\leq C\|F(u)-F(v)\|_{L_t^{\tilde{q}_1'}(I; L_{x}^{\tilde{r}_1'} (|x|^{\tilde{r}_1' \tilde{\gamma}_1}))}.
	\end{equation*}
Consequently, we obtain \eqref{alks} because of \eqref{dgh1}.

We have proved that there exists a unique solution
$u \in C(I ; H^s) \cap L^{q}(I ; L^{r}(|x|^{-r \gamma}))$ with $T=T(\|u_0\|_{H^s}, \alpha, \beta)$.
The continuous dependence on the data follows clearly as before.

\subsection{The critical case}
The situation in this case is similar to the $L^2$ case because
the power $\theta_1$ in the above argument becomes zero in this case.
But the other power $\theta_2=s/2>0$ yields a slight different conclusion as shown in the proof.

We start from showing that $\Phi$ defines a contraction on
\begin{align*}\label{contsets}
\widetilde{X}(T,M,N)=\Big\lbrace u\in  &C_t(I;H^s)\cap L_t^q(I;L_x^r(|x|^{-r\gamma})) :\\
&\sup_{t\in I}\|u\|_{H_x^s}\leq N,\quad \|u\|_{L_t^q(I;L_x^r(|x|^{-r\gamma}))} \leq M \Big\rbrace
\end{align*}
equipped with the distance
$$d(u,v)=\sup_{t\in I}\|u-v\|_{H_x^s}+\|u-v\|_{L_t^q(I;L_x^r(|x|^{-r\gamma}))}.$$
As in \eqref{cr} and \eqref{1js}, we see that
\begin{equation*}
\sup_{t\in I}\|\Phi(u)\|_{H_x^s}\leq C\|u_0\|_{H^s}+C(1+T^{\theta_2}) M^{\beta+1}
\end{equation*}
and
$$\|\Phi(u)\|_{L_t^q(I;L_x^r(|x|^{-r\gamma}))}\leq \|e^{it\Delta}u_0\|_{L_t^q(I;L_x^r(|x|^{-r\gamma}))}+C M^{\beta+1},$$
respectively.
By the dominated convergence theorem we observe that
\begin{equation}\label{oax}
\|e^{it\Delta}u_0\|_{L_t^q(I;L_x^r(|x|^{-r\gamma}))}\leq \varepsilon
\end{equation}
for given $\varepsilon>0$ which will be chosen later, if $T>0$ is small enough.
Hence, $\Phi(u)\in \widetilde{X}$ for $u\in\widetilde{X}$ if
\begin{equation}\label{azs}
C\|u_0\|_{H^s}+C (1+T^{\theta_2})M^{\beta+1}\leq N\quad\text{and}\quad \varepsilon+C M^{\beta+1}\leq M.
\end{equation}
Meanwhile, using the same argument employed to show \eqref{alks}, we see
\begin{align*}
d(\Phi(u),\Phi(v))
&=\sup_{t\in I}\|\Phi(u)-\Phi(v)\|_{H_x^s}+\|\Phi(u)-\Phi(v)\|_{L_t^q(I;L_x^r(|x|^{-r\gamma}))}\\
&\leq 2C(1+T^{\theta_2}) M^\beta d(u,v).
\end{align*}
Now we take $N=2C\|u_0\|_{H^s}$ and $M=2\varepsilon$, and then choose $\varepsilon>0$ small enough so that
\eqref{azs} holds and $2C(1+T^{\theta_2})M^\beta \leq1/2$.
It follows now that $\widetilde{X}$ is stable by $\Phi$ and $\Phi$ is a contraction on $\widetilde{X}$.

Therefore, there exists a unique solution $u \in C(I ; H^s) \cap L^{q}(I ; L^{r}(|x|^{-r \gamma}))$
in the time interval $[0,T]$ with $T=T(u_0,\alpha,\beta)$.
Here we cannot take $T=\infty$ even if $\|u_0\|_{H^s}$ is small enough to guarantee \eqref{oax},
because $\theta_2=s/2>0$.
The continuous dependence on the data follows clearly in the same way as before.

\end{document}